\newtheorem{defi}{Definition}
\newtheorem{lem}{Lemma}
\newtheorem{thm}{Theorem}
\newtheorem{rem}{Remark}
\newtheorem{assump}{Assumption}
\newcommand{\argmin}{\mathop{\rm arg~min}\limits}
\title{Asymptotically efficient estimators for self-similar stationary Gaussian noises under high frequency observations}
\author{Masaaki Fukasawa and Tetsuya Takabatake\footnote{Graduate School of Engineering Science,
		Osaka University, 1-3 Machikaneyama, Toyonaka, Osaka, Japan. 
		Email: takabatake@sigmath.es.osaka-u.ac.jp}}
\date{}
\begin{document}
	\maketitle　
	\begin{abstract}
		This paper proposes feasible asymptotically efficient estimators for a certain class of 
		Gaussian noises with self-similar and stationary properties, 
		which includes the fractional Gaussian noise, under high frequency observations. 
		In this setting, the optimal rate of estimation depends on whether either the Hurst or diffusion parameters is known or not.
		This is due to the singularity of the asymptotic Fisher information matrix 
		for simultaneous estimation of the above two parameters. 
		One of our key ideas is to extend the Whittle estimation method 
		to the situation of high frequency observations.
		We show that our estimators are asymptotically efficient in Fisher's sense. 
	\end{abstract}
	
	\section{Introduction}
	Self-similar and Gaussian properties of noises in time series data are observed in many fields, 
	for example, hydrology, turbulence, molecular biology and financial economics. 
	Fractional Brownian motion, which is introduced by Kolmogorov (1940) and further developed by Manderblot and Van Ness (1968), 
	is the most fundamental continuous-time model to represent these phenomena. 
	Until now, statistical inference problems of discretely observed fractional Brownian motion have been studied 
	under the large sample asymptotics (cf. Fox and Taqqu 1986, Dahlhaus 1989, Liberman et al. 2009, 2012, Cohen et al. 2013) 
	or the high frequency asymptotics (cf. Coeurjolly 2001, Brouste and Iacus 2013, Kawai 2013, Brouste and Fukasawa 2016). 
	Some of them also discuss the optimality of their estimators or local asymptotic normality (LAN) property under those asymptotics. 
	To the best of our knowledge, however, there is no estimator  available so far which is feasible and asymptotic optimal under the high frequency asymptotics,
	despite that it has become important because of increasing availability of high frequency data thanks to recent developments of information technology.
	
	Now, we review the results of Kawai (2013) and Brouste and Fukasawa (2016),  
	where they studied the LAN property of the fractional Gaussian noises under high frequency observations. 
	The parameter to be estimated is $(H, \sigma)\in(0, 1]\times(0, \infty)$, 
	where $H$ and $\sigma$ are the so-called Hurst parameter and  diffusion parameter respectively.
	Kawai (2013) obtained a LAN property in a weak sense, where
	the rate matrix $\bar{\phi}_N(H, \sigma)$ and the asymptotic Fisher information matrix $\mathcal{I}(H, \sigma)$ are given by 
	\begin{align*}
		\bar{\phi}_N(H, \sigma)=
		\begin{bmatrix}
			\frac{1}{\sqrt{N}|\log\delta|}& 0 \\
			0 & \frac{1}{\sqrt{N}} \\
		\end{bmatrix}
		\hspace{0.2cm},\hspace{0.2cm}
		\mathcal{I}(H, \sigma)=
		\begin{bmatrix}
			2 & -\frac{2}{\sigma} \\
			-\frac{2}{\sigma} & \frac{2}{\sigma^2} \\
		\end{bmatrix}.
	\end{align*}
	Here, $N$ is the sample size and $\delta$ is the length of sampling intervals.
	The LAN property is only in a weak sense because $\mathcal{I}(H, \sigma)$ is a singular matrix. 
	As a result, the asymptotic lower bounds of risk are derived only in the case that 
	either the Hurst parameter $H$ or the diffusion parameter $\sigma$ is known.
	In Brouste and Fukasawa (2016), it was shown that the LAN property in the usual sense actually holds even if both of the parameters are unknown.
	One of their key ideas is using a certain class of non-diagonal rate matrices. 
	The LAN property enables them to determine the optimal rate of estimation for each of $H$ and $\sigma$.
	As we see later in this paper, the lower bounds of variance can be calculated more explicitly; for any estimators $\hat{H}_N$ and $\hat{\sigma}_N$,
	\begin{align*}
		\liminf_{\eta\to 0}\liminf_{\delta\to 0}\sup_{|(H, \sigma)-(H_0, \sigma_0)|<\eta}NE_{H, \sigma}[(\hat{H}_N-H)^2]\geq& \mathcal{J}(H_0)^{-1},\\
		\liminf_{\eta\to 0}\liminf_{\delta\to 0}\sup_{|(H, \sigma)-(H_0, \sigma_0)|<\eta}\frac{N}{\sigma^2\left(\log\delta\right)^2}E_{H, \sigma}[(\hat{\sigma}_N-\sigma)^2]\geq&\mathcal{J}(H_0)^{-1},
	\end{align*}
	for any $(H_0, \sigma_0)$, where
	\begin{align*}
		\mathcal{J}(H)=\frac{1}{4\pi}\int_{-\pi}^\pi\left[\frac{\partial}{\partial H}\log{g_H(\lambda)}\right]^2\, d\lambda.
	\end{align*}
	The definition of the function $g_H$ will be given later. 
	It is noteworthy that the optimal rates of estimation are slower than in the case where the other parameter is known.
	
	The  LAN property  implies  the asymptotic efficiency of the maximum likelihood estimator (MLE) in general. 
	The MLE is unfortunately not feasible for the fractional Gaussian noises because the computation of the inverse of the covariance matrices is very heavy. 
	Our main contribution in this context is to construct feasible asymptotically efficient estimators.
	We deal with the three cases: 
	both the Hurst parameter $H$ and the diffusion parameter $\sigma$ are unknown, 
	only the Hurst parameter $H$ is known, only the diffusion parameter $\sigma$ is known. 
	Actually, we work under a more general model of Gaussian noises with self-similar and stationary properties, 
	which generalizes the fractional Gaussian noises. 
	
	This paper is constructed as follows. 
	In Section \ref{Model}, we introduce the model. 
	In Section \ref{Examples}, we show several examples in our framework. 
	In Section \ref{Main}, we present and prove our main results. 
	In Appendix, we give several lemmas and extensions of the results in Kawai (2013), Brouste and Fukasawa (2016). 
	
	\section{Model Assumption}\label{Model}
	In this section, we introduce several assumptions to be satisfied by our model, 
	which are inspired from several previous works, for example, Fox and Taqqu (1986), Dahlhaus (1989, 2006), 
	Lieberman et al. (2009) and Cohen et al. (2013). 
	\begin{assump}\label{model.assump.} \rm 
		Let $\delta>0$ be the length of sampling intervals and $N=N(\delta)$ be the size of data.
		We assume $N\to\infty$, $\delta\to 0$ and $\inf_{\delta>0}N\delta>0$.
		Let $\Theta$, $\Sigma$ be compact subsets of $\mathbb{R}^{p-1}\times(0,1]$, $(0,\infty)$, 
		and true values $\theta_0=(\psi_0, H_0)=(\psi_0^{(1)}, \cdots, \psi_0^{(p-1)}, H_0)$, $\sigma_0$ be interior points of $\Theta$, $\Sigma$ respectively.
		We assume that $\{X_j^\delta\}_{j=1,\dots, N}$ is a stationary Gaussian sequence with mean $0$
		and spectral density $f^\delta(\theta, \sigma, \lambda)$, 
		$\lambda\in [-\pi,\pi]$ satisfying the following conditions $(A.\ref{A.decomposition})-(A.\ref{spd.asyrelation})$. 
		\begin{enumerate}[$({A}.1)$]
			\setcounter{enumi}{-1}
			\item{$f^\delta(\theta, \sigma, \lambda)$ is decomposed as $\sigma^2{\delta}^{2H}f(\theta, \lambda)$, 
				where $f\in\mathcal{C}^{3, 1}\left(\Theta\times[-\pi, \pi]/\{0\}\right)$.}\label{A.decomposition}
		\end{enumerate}
		In the following, we denote $f^\delta(\theta, \sigma, \lambda)$, $f(\theta, \lambda)$ as $f_{\theta, \sigma}^{\delta}(\lambda)$, $f_{\theta}(\lambda)$ respectively. 
		\begin{enumerate}[$({A}.1)$]
			\item{
				If $(\theta_1, \sigma_1)$ and $(\theta_2, \sigma_2)$ are distinct elements of $\Theta$, 
				a set $\{\lambda\in [-\pi,\pi]:\sigma_1 f_{\theta_1}(\lambda)\neq \sigma_2 f_{\theta_2}(\lambda)\}$ has a positive Lebesgue measure. }\label{A.identify}
			\item{There exists a continuous function $\alpha:\Theta\times\Sigma\to(-1,1)$ such that for any $\eta>0$, 
				there exist positive constants $c_{1, \eta}, c_{2, \eta}$, which only depend on $\eta$, 
				such that the following conditions hold for every $(\theta, x)\in\Theta\times[-\pi, \pi]\backslash\{0\}$.
				\begin{enumerate}[$(a)$]
					\item $c_{1, \eta}|\lambda|^{-\alpha(\theta, \sigma)+\eta}\leq f_{\theta}(\lambda)\leq c_{2, \eta}|\lambda|^{-\alpha(\theta, \sigma)-\eta}$.
					\item For any $l\in\{1, 2, 3\}$ and $k\in\{1, \cdots, p\}^l$,
					\begin{align*}
						\left|\frac{\partial^l}{\partial\theta_{k_1}\cdots\partial\theta_{k_l}}f_\theta(\lambda)\right|&\leq c_{2, \eta}|\lambda|^{-\alpha(\theta, \sigma)-\eta}, \\
						\left|\frac{\partial^{l+1}}{\partial\lambda \partial\theta_{k_1}\cdots\partial\theta_{k_l}}f_\theta(\lambda)\right|&\leq c_{2, \eta}|\lambda|^{-\alpha(\theta, \sigma)-1-\eta}.
					\end{align*}
				\end{enumerate}}\label{spd.asyrelation}
			\end{enumerate}
		\end{assump}
		
		\begin{rem}\rm
			Under the above assumptions, the spectral density $f_{\theta, \sigma}^\delta(\lambda)$ and its derivatives are integrable.  
			Moreover, we can exchange the differential and integral operators freely. For example, it holds that
			\begin{align*}
				\frac{\partial}{\partial\theta_j}\int_{-\pi}^\pi \log{\left(\sigma^2 f_\theta(\lambda)\right)}\, d\lambda 
				= \int_{-\pi}^\pi \frac{\partial}{\partial\theta_j}\log{\left(\sigma^2 f_\theta(\lambda)\right)}\, d\lambda,
			\end{align*}
			for $j=1, \cdots, p+1$. We denote $\theta_{p+1} = \sigma$ for notational simplicity. 
		\end{rem}
		
		\begin{rem}\rm
			The condition (A.\ref{A.decomposition}) is a major difference from the model assumptions in the previous works. 
			In the large sample situation, that is, $\delta$ is fixed and $N\to\infty$, we need not to consider this decomposition of the spectral density 
			because we can include the part $\sigma^2\delta^{2H}$ into the model parameters.  
			This is also mentioned in the seminal paper of Dahlhaus (1989), p.1752. 
			In the case of high frequency observations, however, we can not do this because the variable $\delta$ 
			that drives asymptotics is heavily related with the Hurst parameter $H$. 
			Moreover, the parameter $\sigma$ and the others play different roles under high frequency observations.  
			In fact, as we see later, the efficient rate for estimators of $\sigma$ is different from that for the others.
		\end{rem}
		
		\section{Examples}\label{Examples}
		In this section, we give several examples satisfying the assumptions introduced in the previous section. 
		Namely, we treat two models: the fractional Brownian motion and a special case of the fractional Langevin model.
		\subsection{Fractional Brownian Motion}
		A centered Gaussian process $B^H$ is called a fractional Brownian motion with the Hurst parameter $H$ 
		if its covariance structure is given by
		\begin{align*}
			E[B_t^HB_s^H]=\frac{1}{2}(|t|^{2H}+|s|^{2H}-|t-s|^{2H}), s, t\in\mathbb{R}.
		\end{align*}
		Such a process exists and is continuous for all $H\in(0, 1]$ from the Kolmogorov's extension and continuous theorems. 
		Moreover, the process is characterized by stationary increments and self-similar properties: 
		for any $\delta>0$ and $t, k\in\mathbb{R}$,
		\begin{align}\label{prop.fBm}
			B_{t+k}^H-B_t^H \sim B_k^H\hspace{0.2cm}\mbox{and}\hspace{0.2cm} B_{\delta t}^H&\sim \delta^H B_t^H\hspace{0.2cm}\mbox{in law}.
		\end{align}
		
		Let $\delta>0$ and $T=1$ for notational simplicity. 
		Assume we have the following observations:
		\begin{align*}
			\sigma B_{\delta}^H, \sigma B_{2\delta}^H, \cdots, \sigma B_{N\delta}^H,
		\end{align*}
		where $N$ is the sample size and $\delta$ is the length of sampling intervals. 
		Let us consider the high frequency asymptotics, that is, $N\to\infty$ as $\delta\to 0$. 
		Define a sequence $\{X_j^\delta\}_{j=1, \cdots, N}$ by
		\begin{align*}
			X_j^\delta=\sigma\Delta B_{j\delta}^H=\sigma B_{j\delta}^H-\sigma B_{(j-1)\delta}^H.
		\end{align*}
		Note that $\{X_j^\delta\}_{j=1, \cdots, N}$ is a stationary centered Gaussian sequence 
		from (\ref{prop.fBm}). Moreover, its spectral density is characterized by 
		\begin{align*}
			E[X_1^\delta X_k^\delta]=&\frac{\sigma^2\delta^{2H}}{2}(|k+1|^{2H}-2|k|^{2H}+|k-1|^{2H}) \\
			=&\frac{1}{2\pi}\int_{-\pi}^\pi e^{\sqrt{-1}k\lambda}f_{H, \sigma}^\delta(\lambda)\, d\lambda,
		\end{align*}
		where
		\begin{align*}
			f_{H, \sigma}^\delta(\lambda)=&\sigma^2\delta^{2H}f_{H}(\lambda)\\
			=&\sigma^2\delta^{2H}C_H2(1-\cos{\lambda})\sum_{j=-\infty}^\infty\frac{1}{(\lambda+2\pi j)^{1+2H}},
		\end{align*}
		with $C_H=(2\pi)^{-1}\Gamma(2H+1)\sin(\pi H)$. Then, we can easily check the stationary centered Gaussian sequence 
		$\{X_j^\delta\}_{j=1, \cdots, N}$ satisfy all conditions $(A.\ref{A.decomposition})-(A.\ref{spd.asyrelation})$. 
		(cf. Fox and Taqqu (1986) and Rosemarin (2008)) 
		
		\subsection{Fractional Langevin Model}
		In the context of molecular biology, the movement of particle in homogeneous medium is modeled by the Langevin equations. 
		One of the characteristics of these equations is that a mean square displacement of particle linearly grows in time. 
		The particle in inhomogeneous medium, however, does not behave in the same way. 
		Namely, the mean square displacement of particle in this situation grows as a power function in time. 
		This phenomena is the so-called anomalous diffusion. 
		Therefore, we attempt to model this by the following second order stochastic differential equations:
		\begin{align*}
			&dZ_t=Y_t\,dt \\
			&dY_t=-\nabla q(Z_t)-\gamma Y_t\, dt+\sigma dB_t^H,\gamma, \sigma>0,
		\end{align*}
		where $Z$, $Y$ represent the position and velocity of particle respectively, $q$ is the potential, 
		$\gamma, \sigma$ are the friction and diffusion coefficients respectively, and $B^H$ is the fractional Brownian motion with the Hurst parameter $H$. 
		Here, we assumed the mass of the particle is equal to 1 for notational simplicity. 
		Therefore, we call the above equations as a fractional Langevin model named after the Langevin model in the case of $H=1/2$. 
		
		Let $\delta>0$, $T=1$. Here, we consider a situation of a free particle with no friction term, that is, 
		we assume the potential $q$ is a constant function and the friction coefficient $\gamma=0$ under the fractional Langevin model. 
		Then, we consider the statistical inference problem for the Hurst parameter $H$ and the diffusion parameter $\sigma$ 
		from high frequency position data:
		\begin{align*}
			Z_{\delta}, Z_{2\delta}, \cdots, Z_{N\delta}, 
		\end{align*}
		where $N$ is the sample size, $\delta$ is the length of sampling intervals 
		which satisfy $N\to\infty$ as $\delta\to 0$. 
		In this situation, we have no velocity data so that we consider to substitute numerical derivatives of the position data $\{Z_j\}_{j=1, \cdots, N}$ for the velocity data.  
		Namely, we set the proxy data $\{Y_j^\delta\}_{j=1, \cdots, N}$ for the velocity data of particle as follows:
		\begin{align*}
			Y_j^\delta=\frac{1}{\delta}(Z_{j\delta}-Z_{(j-1)\delta})=\frac{1}{\delta}\int_{(j-1)\delta}^{j\delta}Y_t\, dt.
		\end{align*}
		If we set a sequence $\{X_j^\delta\}_{j=2, \cdots, N}$ as differences of $\{Y_j^\delta\}_{j=1, \cdots, N}$:
		\begin{align*}
			X_j^\delta=Y_j^\delta-Y_{j-1}^\delta=\frac{\sigma}{\delta}\int_{(j-1)\delta}^{j\delta}(B_t^H-B_{t-\delta}^H)\, dt,
		\end{align*}
		then we can easily show that $\{X_j^\delta\}_{j=2, \cdots, N}$ is a Gaussian sequence and its spectral density is characterized by 
		\begin{align*}
			E[X_1^\delta X_k^\delta]=&\frac{\sigma^2\delta^{2H}}{2} (|k+2|^{2H+2}-4|k+1|^{2H+2} \\
			& \hspace{1cm} +6|k|^{2H+2}-4{|k-1|}^{2H+2}+{|k-2|}^{2H+2}) \\
			=&\frac{1}{2\pi}\int_{-\pi}^\pi e^{\sqrt{-1}k\lambda}f_{H, \sigma}^\delta(\lambda)\, d\lambda,
		\end{align*}
		where 
		\begin{align*}
			f_{H, \sigma}^\delta(\lambda)=&\sigma^2\delta^{2H}f_{H}(\lambda)\\
			=&\sigma^2\delta^{2H}C_H\{2(1-\cos{\lambda})\}^2\sum_{j=-\infty}^\infty\frac{1}{(\lambda+2\pi j)^{3+2H}},
		\end{align*}
		with $C_H=(2\pi)^{-1}\Gamma(2H+1)\sin(\pi H)$. In particular, $\{X_j^\delta\}_{j=2, \cdots, N}$ is a stationary sequence 
		and satisfy the condition (A.\ref{A.decomposition}). 
		Moreover, we can also prove the behavior of the spectral density and its derivatives at the origin satisfy 
		the conditions $(A.\ref{A.identify})-(A.\ref{spd.asyrelation})$ in the similar ways of the previous one. 
		As the above, this example is also included in our model framework. 
		
		\section{Construction of Asymptotically Efficient Estimators}\label{Main}
		The Whittle estimation method is very useful to estimate the Hurst parameter for several stationary Gaussian time series 
		in various aspects, for example, the Whittle estimator enjoys asymptotic efficiency as well as the MLE 
		and can be computed easier and faster than it because we compute an approximated log-likelihood 
		instead of the exact one which include the inverse of covariance matrix. 
		Therefore, we attempt to prove the Whittle estimation method also works well under the high frequency observations. 
		In the following, we consider statistical inference problems for our model in three cases: 
		all parameters $(\theta, \sigma)$ are unknown, only the Hurst parameter $H$ is known, 
		only the diffusion parameter $\sigma$ is known.
		In each setting, we construct an feasible asymptotically efficient estimator by applying the Whittle estimation method. 
	
		\subsection{Notation}
		In this subsection, we prepare notation. Let 
		\begin{align*}
			(\mathcal{X}_n, \mathcal{A}_n, \{P_{\theta, \sigma}^{(\delta)}; (\theta, \sigma)\in\Theta\times\Sigma\})
		\end{align*}
		be a statistical experiments for our model and set
		\begin{align*}
			a_q(\theta)=&\left(\frac{1}{2\pi}\int_{-\pi}^\pi\frac{\partial}{\partial\theta_1}\log{f_\theta(\lambda)}\, d\lambda
			, \cdots, \frac{1}{2\pi}\int_{-\pi}^\pi\frac{\partial}{\partial\theta_q}\log{f_\theta(\lambda)}\, d\lambda\right), \\
			\mathcal{F}_q(\theta)=&\left(
			\frac{1}{4\pi}\int_{-\pi}^\pi\frac{\partial}{\partial\theta_j}\log{f_\theta(\lambda)}\frac{\partial}{\partial\theta_k}\log{f_\theta(\lambda)}\, d\lambda
			\right)_{j, k=1, \cdots, q},\\
			\mathcal{G}_q(\theta)=&
			\left(
			\frac{1}{4\pi}\int_{-\pi}^\pi\frac{\partial}{\partial\theta_j}\log{g_\theta(\lambda)}\frac{\partial}{\partial\theta_k}\log{g_\theta(\lambda)}\, d\lambda
			\right)_{j, k=1, \cdots, q} .
		\end{align*}
		for $q=1, \cdots, p$, where the function $g_\theta(\lambda)$, $\lambda\in[-\pi, \pi]$ is defined by
		\begin{align}\label{normalized.spd}
			g_\theta(\lambda)=\frac{f_\theta(\lambda)}{b(\theta)}\hspace{0.2cm}\mbox{with}\hspace{0.2cm}b(\theta)=\exp\left(\frac{1}{2\pi}\int_{-\pi}^{\pi}\log{f_\theta(\lambda)}\, d\lambda\right).
		\end{align}
		Moreover, we set $a_0(\theta)=0$ and denote the diffusion parameter $\sigma$ as $\theta_{p+1}$ 
		for notational simplicity. 
		
		\subsection{Only the Hurst parameter $H$ is known}
		Assume the true value of the Hurst parameter $H_0$ is known. 
		Under the condition $(A.\ref{A.identify})$, we set an estimation function $\bar{L}_{N}(\psi, \sigma)$ as follows.
		\begin{align*}
			\bar{L}_N(\psi, \sigma)&=\frac{1}{2\pi} \int_{-\pi}^{\pi}\log\left(\sigma^2\delta^{2H_0}f_{\psi}(\lambda)\right)\, d\lambda 
			+\frac{1}{2\pi} \int_{-\pi}^{\pi} \frac{I_N(\lambda)}{\sigma^2\delta^{2H_0}f_{\psi}(\lambda)}\, d\lambda \\
			&=2H_0\log\delta + \frac{1}{2\pi} \int_{-\pi}^{\pi}\log\left(\sigma^2 f_{\psi}(\lambda)\right)\, d\lambda 
			+ \frac{1}{2\pi} \int_{-\pi}^{\pi} \frac{\bar{I}_N(\lambda)}{\sigma^2 f_{\psi}(\lambda)}\, d\lambda,
		\end{align*}
		where $f_{\psi}(\lambda)=f(\psi, H_0, \lambda)$ and
		\begin{align}\label{periodogram}
			I_N(\lambda) = \frac{1}{2\pi N} {\left| \sum_{j=1}^{N}X_j^{\delta}e^{\sqrt{-1}j\lambda}\right|}^2, \bar{I}_N(\lambda) = \frac{1}{2\pi N} {\left| \sum_{j=1}^{N}\delta^{-H_0}X_j^{\delta}e^{\sqrt{-1}j\lambda}\right|}^2.
		\end{align}
		Note that $\{\delta^{-H_0}{X}_j^\delta\}_{j=1, \cdots, N}$ is independent of the length of the sampling interval $\delta$ 
		because it holds that 
		\begin{align}\label{true.data.scale}
			\left({\delta}^{-H_0}X_1^\delta, \cdots, {\delta}^{-H_0}X_N^\delta\right)=
			\left(X_1^1, \cdots, X_N^1\right)
		\end{align}
		in law under $P_{\theta_0, \sigma_0}^{(\delta)}$ from the condition $(A.\ref{A.identify})$. 
		As a result, we can regard $\{\delta^{-H_0}{X}_j^\delta\}_{j=1, \cdots, N}$ as an stationary Gaussian sequence
		with mean $0$ and spectral density $\sigma^2 f_\theta(\lambda)$. 
		Here, we set an estimator 
		\begin{align*}
			\left(\bar{\psi}_N, \bar{\sigma}_N\right)&=\argmin_{(\psi, H_0, \sigma)\in\Theta\times\Sigma} \bar{L}_N(\psi, \sigma)\\
			&=\argmin_{(\psi, H_0, \sigma)\in\Theta\times\Sigma}\left\{\frac{1}{2\pi} \int_{-\pi}^{\pi}\log\left(\sigma^2 f_{\psi}(\lambda)\right)\, d\lambda 
			+\frac{1}{2\pi} \int_{-\pi}^{\pi} \frac{\bar{I}_N(\lambda)}{\sigma^2 f_{\psi}(\lambda)}\, d\lambda\right\}.
		\end{align*}
		Then, we get the following theorem from Theorem 5 of Lieberman et al. (2009) 
		and a easy modification of Theorem 2.4 of Cohen et al. (2013). 
		\begin{thm}{\label{asy.nor.known}}
			Suppose Assumption \ref{model.assump.} and $H_0$ is known. 
			Then, $(\bar{\psi}_N, \bar{\sigma}_N)$ is an asymptotically efficient estimator in Fisher's sense, that is,
			\begin{align*}
				\mathcal{L}\left\{\sqrt{N}
				\begin{pmatrix}
					\bar{\psi}_N - \psi_0\\
					\bar{\sigma}_N - \sigma_0
				\end{pmatrix}
				\bigg| P_{\theta_0, \sigma_0}^{(\delta)}\right\}\stackrel{\delta\to 0}{\rightarrow}\mathcal{N}\left(0,\mathcal{F}(\psi_0, \sigma_0)^{-1}\right),
			\end{align*}
			where the asymptotic Fisher information matrix $\mathcal{F}(\psi, \sigma)$ is given by
			\begin{align*}
				\mathcal{F}(\psi, \sigma)=\left[\frac{1}{4\pi}\int_{-\pi}^{\pi}\frac{\partial}{\partial\theta_j}\log\left(\sigma^2 f_\psi(\lambda)\right)\frac{\partial}{\partial\theta_k}\log\left(\sigma^2 f_\psi(\lambda)\right)\, d\lambda\right]_{j, k=1, \cdots, p-1, p+1}.
			\end{align*}
		\end{thm}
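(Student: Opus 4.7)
The plan is to reduce the high-frequency asymptotic problem to a classical large-sample Whittle estimation problem via the self-similarity scaling relation (\ref{true.data.scale}). The key point is that once $H_0$ is fixed and known, the rescaled data $\{\delta^{-H_0}X_j^\delta\}_{j=1,\dots,N}$ has a distribution that does not depend on $\delta$, so the stochastic behavior of $\bar{L}_N$ is effectively $\delta$-free and the proof should be a reduction to existing results.

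First, using (\ref{true.data.scale}), I would note that under $P_{\theta_0,\sigma_0}^{(\delta)}$ the periodogram $\bar{I}_N(\lambda)$ has exactly the same finite-dimensional distributions as the periodogram of the $\delta$-invariant stationary Gaussian sequence $\{X_j^1\}_{j=1,\dots,N}$ with spectral density $\sigma_0^2 f_{\theta_0}(\lambda)$. Since the term $2H_0\log\delta$ in $\bar{L}_N$ is constant in $(\psi,\sigma)$, the minimizer $(\bar{\psi}_N,\bar{\sigma}_N)$ coincides with the argmin of the classical Whittle contrast
\begin{align*}
    \tilde{L}_N(\psi,\sigma)
    = \frac{1}{2\pi}\int_{-\pi}^{\pi}\log\left(\sigma^2 f_\psi(\lambda)\right)d\lambda
      + \frac{1}{2\pi}\int_{-\pi}^{\pi}\frac{\bar{I}_N(\lambda)}{\sigma^2 f_\psi(\lambda)}d\lambda,
\end{align*}
applied to $\{X_j^1\}$ with candidate spectral density $\sigma^2 f_\psi(\lambda)$; this is a genuine large-sample Whittle problem with no remaining dependence on $\delta$.

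Next, I would verify that the spectral family $\{\sigma^2 f_\psi(\lambda)\}$, with $(\psi,\sigma)$ ranging over the slice $\{\theta\in\Theta:H=H_0\}\times\Sigma$, satisfies the hypotheses of Theorem 5 of Lieberman et al.\ (2009) and Theorem 2.4 of Cohen et al.\ (2013): identifiability comes from (A.\ref{A.identify}); the polynomial singularity bounds $|\lambda|^{-\alpha(\theta,\sigma)\pm\eta}$ on $f_\psi$ and its first three $\theta$-derivatives (together with the extra $\lambda$-derivative) are supplied by (A.\ref{spd.asyrelation}); and the $\mathcal{C}^{3,1}$ regularity comes from (A.\ref{A.decomposition}). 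With these conditions in hand, the classical Whittle argument yields consistency via uniform convergence of $\tilde{L}_N(\psi,\sigma)$ to its Kolmogorov limit, which is uniquely minimized at $(\psi_0,\sigma_0)$ by identifiability. Asymptotic normality then follows from a Taylor expansion of the score at $(\psi_0,\sigma_0)$: a CLT for quadratic functionals of Gaussian sequences with long-memory spectral densities applied to $\sqrt{N}\nabla\tilde{L}_N(\psi_0,\sigma_0)$ yields convergence to $\mathcal{N}(0,\mathcal{F}(\psi_0,\sigma_0))$, while a law of large numbers controls the Hessian and produces the final factor $\mathcal{F}(\psi_0,\sigma_0)^{-1}$.

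The main obstacle is the ``easy modification'' of Cohen et al.'s Theorem 2.4 required to accommodate joint estimation of the vector parameter $\psi\in\mathbb{R}^{p-1}$ together with the scale parameter $\sigma$; one must check that the CLT for quadratic forms in the score extends to this multivariate setting under the present spectral bounds, and that the information matrix $\mathcal{F}(\psi_0,\sigma_0)$ is invertible so that the inverse is meaningful. Because the scaling invariance has removed all $\delta$-dependence, the information matrix here is the ordinary Whittle information on a well-specified Gaussian model, and its nonsingularity is a consequence of (A.\ref{A.identify}) in the usual way; in particular, unlike the simultaneous estimation of $(H,\sigma)$ under high-frequency asymptotics, no new degeneracy appears. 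That the asymptotic covariance matches the Fisher information, i.e.\ Whittle efficiency, is the standard conclusion for Gaussian stationary time series satisfying these spectral conditions.
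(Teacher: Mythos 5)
Your proposal is correct and follows essentially the same route as the paper: the paper likewise uses the scaling identity (\ref{true.data.scale}) to observe that $\{\delta^{-H_0}X_j^\delta\}$ is a $\delta$-free stationary Gaussian sequence with spectral density $\sigma^2 f_\theta(\lambda)$, so that $(\bar{\psi}_N,\bar{\sigma}_N)$ becomes a classical large-sample Whittle estimator, and then invokes Theorem 5 of Lieberman et al.\ (2009) together with a modification of Theorem 2.4 of Cohen et al.\ (2013). Your additional remarks on verifying the spectral hypotheses and the nonsingularity of $\mathcal{F}(\psi_0,\sigma_0)$ only make explicit what the paper leaves to the cited references.
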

		
		\subsection{All parameters $(\theta, \sigma)$ are unknown}
		In this section, we consider the case that all parameters $(\theta, \sigma)$ are unknown. 
		This case is more difficult than the previous case because we can not substantially take the appropriate scaling of the data like (\ref{true.data.scale}).
		However, we can construct an asymptotically efficient estimator as follows: 
		At first, the normalized spectral density $g_\theta$, which is defined in (\ref{normalized.spd}), satisfies the conditions $(A.\ref{A.decomposition})-(A.\ref{spd.asyrelation})$ and
		\begin{align} \label{specnormal}
			\frac{1}{2\pi}\int_{-\pi}^{\pi}\log{g_\theta(\lambda)}d\lambda=0 \hspace{0.2cm}\mbox{for all $\theta\in\Theta$}.
		\end{align}
		Therefore, we set an estimation function ${\sigma}^2_N(\theta)$ and an estimator $\hat{\theta}_N$ as
		\begin{align*} 
			{\sigma}_N^2(\theta)=\frac{1}{2\pi}\int_{-\pi}^{\pi} \frac{I_N(\lambda)}{g_\theta(\lambda)}\, d\lambda\hspace{0.2cm},\hspace{0.2cm}\hat{\theta}_N=\argmin_{\theta\in\Theta}{\sigma}_N^2(\theta),
		\end{align*}
		where $I_N(\lambda)$ is defined in (\ref{periodogram}). 
		Note that the estimator $\hat{\theta}_N$ also minimizes an scaled estimation function $\widetilde{\sigma}_N^2(\theta)$:
		\begin{align} 
			\widetilde{\sigma}_N^2(\theta) = (\delta^{2H_0} b(\theta_0))^{-1} {\sigma}_N^2(\theta) = \frac{1}{2\pi} \int_{-\pi}^{\pi} \frac{\widetilde{I}_N(\lambda)}{g_\theta(\lambda)}\, d\lambda, 
		\end{align}
		where
		\begin{align*}
			\widetilde{I}_N(\lambda) = \frac{1}{2\pi N} {\Biggl| \sum_{j=1}^{N}\widetilde{X}_je^{\sqrt{-1}j{\lambda}}\Biggr|}^2\hspace{0.2cm},
			\hspace{0.2cm}\widetilde{X}_j=(\delta^{2H_0} b(\theta_0))^{-\frac{1}{2}}X_j^\delta. 
		\end{align*}
		Then, we can regard $\{\widetilde{X}_j\}_{j=1, \cdots, N}$ as a stationary Gaussian sequence with mean $0$ and spectral density $\sigma^2 g_\theta(\lambda)$ 
		under $P_{\theta_0, \sigma_0}^{(\delta)}$ in the same way as (\ref{true.data.scale}). 
		In other word, we can formally take an appropriate scaling of the data even if $H_0$ is unknown. 
		Set $\widetilde{\sigma}_N=\sqrt{\widetilde{\sigma}_N^2(\hat{\theta}_N)}$. Then, we regard a random variable $(\hat{\theta}_N, \widetilde{\sigma}_N)$  
		as a minimizer of a function $L_N(\theta, \sigma)$, that is, 
		\begin{align*}
			(\hat{\theta}_N, \widetilde{\sigma}_N)=\argmin_{(\theta, \sigma)\in\Theta\times\Sigma} L_N(\theta, \sigma),
		\end{align*}
		where
		\begin{align*}
			L_N(\theta, \sigma)=\log\sigma^2 + \frac{1}{2\pi} \int_{-\pi}^{\pi} \frac{\widetilde{I}_N(\lambda)}{\sigma^2 g_\theta(\lambda)}\, d\lambda. 
		\end{align*}
		Therefore, we get the following central limit theorem (CLT) from Theorem 5 of Lieberman et al. (2009) in the similar way as Theorem \ref{asy.nor.known}.
		\begin{thm} \label{asynormal.LRR}
			Suppose Assumption \ref{model.assump.}. Then, the following CLT holds.
			\begin{align}
				\mathcal{L}\left\{\sqrt{N}
				\begin{pmatrix}
					\hat{\theta}_N - \theta_0 \\
					\widetilde{\sigma}_N -\sigma_0
				\end{pmatrix}
				\bigg| P_{\theta_0, \sigma_0}^{(\delta)}\right\}
				\stackrel{\delta\to 0}{\rightarrow}\mathcal{N}\left(0,
				\begin{bmatrix}
					\mathcal{G}_p({\theta}_0)^{-1}&0_{p\times 1} \\
					0_{1\times p}&\frac{\sigma_0^2}{2}
				\end{bmatrix}
				\right). \label{CLT.scale.}
			\end{align}
		\end{thm}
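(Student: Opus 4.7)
The plan is to reduce the theorem to the classical large-sample Whittle estimation theory and then invoke Theorem 5 of Lieberman et al.\ (2009) in exactly the same way as in Theorem \ref{asy.nor.known}; the only genuinely new ingredient is that the normalization $g_\theta = f_\theta/b(\theta)$ decouples the asymptotic Fisher information into a block-diagonal form.

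First I would observe that under $P_{\theta_0,\sigma_0}^{(\delta)}$ the rescaled sequence $\{\widetilde X_j\}_{j=1,\ldots,N}$ is a stationary centered Gaussian sequence with spectral density $\sigma_0^2 g_{\theta_0}(\lambda)$, and that this law does not depend on $\delta$, thanks to the self-similar decomposition $(A.\ref{A.decomposition})$. This turns the intrinsically high-frequency problem---in which $H_0$ is entangled with $\delta$ through the factor $\delta^{2H_0}$---into a purely large-sample Whittle problem for the parametric family $(\theta,\sigma)\mapsto\sigma^2 g_\theta$. Since $\hat\theta_N$ also equals $\argmin_\theta \widetilde\sigma_N^2(\theta)$ (the functions $\sigma_N^2$ and $\widetilde\sigma_N^2$ differ only by a positive multiplicative constant), and $\widetilde\sigma_N=\sqrt{\widetilde\sigma_N^2(\hat\theta_N)}$ is the corresponding concentrated estimator, the pair $(\hat\theta_N,\widetilde\sigma_N)$ coincides with the Whittle estimator of $(\theta,\sigma)$ computed from the virtual sample $\{\widetilde X_j\}$.

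Next I would verify that $g_\theta$ inherits the regularity conditions needed by Lieberman et al.\ Smoothness and positivity of $b(\theta)=\exp((2\pi)^{-1}\int\log f_\theta\,d\lambda)$ on the compact set $\Theta$, combined with $(A.\ref{spd.asyrelation})$ and the freedom to differentiate under the integral (the Remark after Assumption \ref{model.assump.}), transfer the bounds on $f_\theta$ and its $\theta$- and $\lambda$-derivatives to $g_\theta$ with the same exponent $\alpha(\theta,\sigma)$. Identifiability of $\theta\mapsto g_\theta$ follows from $(A.\ref{A.identify})$: if $g_{\theta_1}=g_{\theta_2}$ a.e.\ with $\theta_1\ne\theta_2$, then $f_{\theta_1}$ and $f_{\theta_2}$ are proportional, and rescaling $\sigma_1,\sigma_2$ so as to cancel the proportionality contradicts $(A.\ref{A.identify})$. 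Theorem 5 of Lieberman et al.\ then yields consistency of $(\hat\theta_N,\widetilde\sigma_N)$ together with the $\sqrt N$-CLT, with asymptotic covariance $\mathcal I(\theta_0,\sigma_0)^{-1}$, where
\begin{align*}
\mathcal I(\theta,\sigma)_{jk}=\frac{1}{4\pi}\int_{-\pi}^\pi \frac{\partial}{\partial \phi_j}\log\bigl(\sigma^2 g_\theta(\lambda)\bigr)\,\frac{\partial}{\partial \phi_k}\log\bigl(\sigma^2 g_\theta(\lambda)\bigr)\,d\lambda,\qquad \phi=(\theta_1,\ldots,\theta_p,\sigma).
\end{align*}

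The last step, and the only substantive new calculation, is to use (\ref{specnormal}) to block-diagonalize $\mathcal I$. Differentiating $\int_{-\pi}^\pi \log g_\theta(\lambda)\,d\lambda=0$ in $\theta_j$ (which is permissible by the same Remark) gives $\int_{-\pi}^\pi \partial_{\theta_j}\log g_\theta(\lambda)\,d\lambda=0$, and since $\partial_\sigma\log(\sigma^2 g_\theta)=2/\sigma$ is $\lambda$-independent, each $(\theta_j,\sigma)$ cross term of $\mathcal I$ vanishes. The $(\sigma,\sigma)$ entry is $2/\sigma_0^2$ and the upper-left $p\times p$ block is exactly $\mathcal G_p(\theta_0)$, so $\mathcal I(\theta_0,\sigma_0)$ is block-diagonal with inverse precisely the matrix on the right-hand side of (\ref{CLT.scale.}). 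The main bookkeeping obstacle is simply that $\{\widetilde X_j\}$ cannot be computed since it depends on the unknown $H_0$; however, because both $\hat\theta_N$ and $\widetilde\sigma_N$ transform covariantly under the unknown positive constant $\delta^{2H_0}b(\theta_0)$, the asymptotics obtained on the virtual sample translate directly into asymptotics for the actual (observable) estimators.
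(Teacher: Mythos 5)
Your proposal is correct and follows essentially the same route as the paper: reduce to a $\delta$-free large-sample Whittle problem for the virtual sample $\{\widetilde X_j\}$ with spectral density $\sigma^2 g_\theta$, invoke Theorem 5 of Lieberman et al.\ (2009) as in Theorem \ref{asy.nor.known}, and then use the normalization (\ref{specnormal}) to kill the $(\theta_j,\sigma)$ cross terms and block-diagonalize the asymptotic Fisher information into $\mathcal{G}_p(\theta_0)$ and $2/\sigma_0^2$. The only difference is that you spell out the transfer of conditions $(A.0)$--$(A.2)$ and identifiability to $g_\theta$, which the paper simply asserts.
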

		
		\begin{proof}
			In the similar way as Theorem \ref{asy.nor.known}, (\ref{CLT.scale.}) holds and 
			the asymptotic covariance matrix is given by the inverse of the matrix
			\begin{align}
				\left[\frac{1}{4\pi}\int_{-\pi}^{\pi}\frac{\partial}{\partial\theta_j}\log\left(\sigma^2 g_\theta(\lambda)\right)\frac{\partial}{\partial\theta_k}\log\left(\sigma^2 g_\theta(\lambda)\right)\, d\lambda\right]_{j, k=1, \cdots, p+1}.\label{asy.var.}
			\end{align}
			Then, the components of the matrix (\ref{asy.var.}) are calculated as
			\begin{align*}
				\frac{1}{4\pi}\int_{-\pi}^{\pi}\left[\frac{\partial}{\partial\sigma}\log\left(\sigma^2 g_\theta(\lambda)\right)\right]^2\, d\lambda &= \frac{2}{\sigma^2},\\
				\int_{-\pi}^{\pi}\frac{\partial}{\partial\sigma}\log\left(\sigma^2 g_\theta(\lambda)\right) \frac{\partial}{\partial\theta_j}\log\left(\sigma^2 g_\theta(\lambda)\right)\, d\lambda &= \frac{2}{\sigma}\frac{\partial}{\partial\theta_j}\int_{-\pi}^{\pi} \log{g_\theta(\lambda)}\, d\lambda = 0, \\
				\int_{-\pi}^{\pi}\frac{\partial}{\partial\theta_j}\log\left(\sigma^2 g_\theta(\lambda)\right) \frac{\partial}{\partial\theta_k}\log\left(\sigma^2 g_\theta(\lambda)\right)\, d\lambda &= \int_{-\pi}^{\pi}\frac{\partial}{\partial\theta_j} \log{g_\theta(\lambda)}\frac{\partial}{\partial\theta_k} \log{g_\theta(\lambda)}\, d\lambda,
			\end{align*}
			for each $j, k = 1, \cdots, p$ from the relation (\ref{specnormal}). 
			Hence, the matrix (\ref{asy.var.}) is equal to 
			\begin{align}\label{asy.var.trans.}
				\begin{bmatrix}
					\mathcal{G}_p({\theta})&0_{p\times 1} \\
					0_{1\times p}&\frac{2}{\sigma^2}
				\end{bmatrix}.
			\end{align}
			Moreover, the inverse of the matrix (\ref{asy.var.trans.}) is calculated as follows.
			\begin{align*}
				\begin{bmatrix}
					\mathcal{G}_p({\theta})&0_{p\times 1} \\
					0_{1\times p}&\frac{2}{\sigma^2}
				\end{bmatrix}^{-1}
				=
				\begin{bmatrix}
					\mathcal{G}_p({\theta})^{-1}&0_{p\times 1} \\
					0_{1\times p}&\frac{\sigma^2}{2}
				\end{bmatrix}.
			\end{align*}
			Therefore, this finishes the proof of Theorem \ref{asynormal.LRR}.
		\end{proof}
		
		Note that the random variable $\widetilde{\sigma}_N$ is not a statistic because we construct this by using the true value $\theta_0$. 
		Therefore, we construct an estimator $\hat{\sigma}_N$ for the diffusion parameter $\sigma$ by substituting the estimators 
		$\hat{\theta}_N$ and $\hat{H}_N$ to the true values $\theta_0$ and $H_0$ in the scaled function $\widetilde{\sigma}_N$. 
		Namely, we set 
		\begin{align*}
			\hat{\sigma}_N=\sqrt{(\delta^{2\hat{H}_N} b(\hat{\theta}_N))^{-1}{\sigma}_N^2(\hat{\theta}_N)}.
		\end{align*}
		Here, we show the estimator $(\hat{\theta}_N, \hat{\sigma}_N)$ is asymptotically efficient in Fisher's sense. 
		Before showing this claim, we review the definition of the asymptotically efficient estimator 
		in this sense more precisely following by Ibragimov and Has'minskii (1981), p.159. 
		
		\begin{defi}\label{defi.eff.}
			Let $\Theta\subset\mathbb{R}^p$ and a family of measures $\{P_\theta^{(\delta)}; \theta\in\Theta\}$ 
			satisfy the LAN property at a point $\theta_0\in\Theta$ as $\delta\to 0$, that is, 
			for a certain $p\times p$-matrix $\phi_\delta(\theta_0)$ and any $u\in\mathbb{R}^{p}$, 
			the log-likelihood ratio admits the representation$:$
			\begin{align*}
				\log{\frac{dP_{\theta_0+\phi_\delta(\theta_0)u}^{(\delta)}}{dP_{\theta_0}^{(\delta)}}}=\langle u, \zeta_\delta(\theta_0)\rangle - \frac{1}{2}\langle \mathcal{J}(\theta_0)u, u\rangle + r_\delta(\theta_0),
			\end{align*}
			where $\mathcal{J}(\theta_0)$ is a nondegenerate $p\times p$-matrix and 
			\begin{align*}
				\zeta_\delta(\theta_0)\rightarrow\mathcal{N}(0, \mathcal{J}(\theta_0)),\hspace{0.2cm} r_\delta(\theta_0)\rightarrow 0,
			\end{align*}
			in law under $P_{\theta_0}^{(\delta)}$ as $\delta\to 0$. A sequence of estimators $\hat{\theta}_N$ is called asymptotically efficient in Fisher's sense 
			at the point $\theta_0$ if it holds that
			\begin{align*}
				\mathcal{L}\left\{\phi_\delta^{-1}(\theta_0)(\hat{\theta}_N-\theta_0)
				\Big|P_{\theta_0}^{(\delta)}\right\}\stackrel{\delta\to 0}{\rightarrow}\mathcal{N}\left(0, \mathcal{J}(\theta_0)^{-1}\right).
			\end{align*}
			Here, the matrix $\phi_\delta(\theta_0)$ is usually called as a rate matrix. 
		\end{defi}
		
		\begin{thm}\label{CLTsigmahat}
			Suppose Assumption \ref{model.assump.} and a rate matrix $\phi_N=\phi_N(\theta_0, \sigma_0)$ 
			satisfies Assumption \ref{assump.rate} in Appendix. 
			Then, the sequence of estimators $(\hat{\theta}_N, \hat{\sigma}_N)$ is asymptotically efficient in Fisher's sense, that is, 
			\begin{align*}
				\mathcal{L}\left\{\phi_N^{-1}
				\begin{pmatrix}
					\hat{\theta}_N-\theta_0\\
					\hat{\sigma}_N-\sigma_0
				\end{pmatrix}
				\bigg|P_{\theta_0, \sigma_0}^{(\delta)}\right\}\stackrel{\delta\to 0}{\rightarrow}\mathcal{N}\left(0, \mathcal{J}(\theta_0, \sigma_0)^{-1}\right),
			\end{align*}
			where
			\begin{align*}
				&\mathcal{J}(\theta, \sigma)=
				\begin{pmatrix}
					D& 0_{p-1\times 2} \\
					0_{2\times p-1}& E
				\end{pmatrix}
				\begin{pmatrix}
					\mathcal{F}_p(\theta)& a_p(\theta) \\
					a_p(\theta)^{\mathrm{T}}& 2
				\end{pmatrix}
				\begin{pmatrix}
					D& 0_{p-1\times 2} \\
					0_{2\times p-1}& E
				\end{pmatrix}^{\mathrm{T}}, \\
				&D=D(\theta_0, \sigma_0)=\mathrm{diag}(d^{(1)}, \cdots, d^{(p-1)}) , 
				\hspace{0.2cm}
				E=E(\theta_0, \sigma_0)=
				\begin{pmatrix}
					\alpha& \gamma \\
					\hat{\alpha}& \hat{\gamma}
				\end{pmatrix}.
			\end{align*}
		\end{thm}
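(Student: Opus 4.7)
The plan is to derive the joint asymptotics of $(\hat{\theta}_N, \hat{\sigma}_N)$ from the CLT for $(\hat{\theta}_N, \widetilde{\sigma}_N)$ in Theorem \ref{asynormal.LRR} by a Taylor-type expansion that carefully isolates the $\log\delta$ divergence. The starting identity, immediate from the definitions of $\hat{\sigma}_N$ and $\widetilde{\sigma}_N$, is
\begin{align*}
\hat{\sigma}_N = \widetilde{\sigma}_N\cdot\delta^{H_0-\hat{H}_N}\cdot\sqrt{b(\theta_0)/b(\hat{\theta}_N)},
\end{align*}
so the discrepancy $\hat{\sigma}_N-\widetilde{\sigma}_N$ is expressed through a smooth function of $\hat{\theta}_N$ together with an exponential factor in $\hat{H}_N$. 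Taking logarithms and expanding to first order around $(\sigma_0,\theta_0)$, using $\hat{\theta}_N-\theta_0=O_p(N^{-1/2})$, $\widetilde{\sigma}_N-\sigma_0=O_p(N^{-1/2})$ from Theorem \ref{asynormal.LRR}, and the fact that $(\log\delta)^2/N\to 0$ under $\inf_\delta N\delta>0$, I would obtain
\begin{align*}
\hat{\sigma}_N-\sigma_0 = (\widetilde{\sigma}_N-\sigma_0) - \sigma_0(\log\delta)(\hat{H}_N-H_0) - \tfrac{\sigma_0}{2}\,a_p(\theta_0)^{\mathrm{T}}(\hat{\theta}_N-\theta_0) + o_p\bigl(|\log\delta|/\sqrt{N}\bigr),
\end{align*}
where the $a_p(\theta_0)$ factor arises from $\partial_{\theta_j}\log b(\theta_0)=[a_p(\theta_0)]_j$.

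This linear representation means $\bigl(\hat{\theta}_N-\theta_0,\,\hat{\sigma}_N-\sigma_0\bigr)^{\mathrm{T}}=M_N\bigl(\hat{\theta}_N-\theta_0,\,\widetilde{\sigma}_N-\sigma_0\bigr)^{\mathrm{T}}+o_p$, for an explicit lower-block-triangular matrix $M_N$ whose last row carries a single $\log\delta$ entry in the $\hat{H}_N$ slot. The next step is to left-multiply by $\phi_N^{-1}$ and apply the delta method. The block structure of $\phi_N$ dictated by Assumption \ref{assump.rate} is tailored precisely to $M_N$: the diagonal block associated to $D$ absorbs the standard $N^{-1/2}$ rate for the $\psi$-coordinates, while the $2\times 2$ block associated to $E$ with entries $(\alpha,\gamma,\hat{\alpha},\hat{\gamma})$ performs a change of basis in the $(H,\sigma)$-plane that offsets the $\log\delta$ coefficient inside $M_N$ and produces a non-degenerate limit at rates $N^{-1/2}$ for $\hat{H}_N-H_0$ and $|\log\delta|/\sqrt{N}$ for $\hat{\sigma}_N-\sigma_0$.

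The limit covariance is then the sandwich of $\phi_N^{-1}M_N/\sqrt{N}$ against the block-diagonal limit $\mathrm{diag}(\mathcal{G}_p(\theta_0)^{-1},\sigma_0^2/2)$ from Theorem \ref{asynormal.LRR}. A short computation using the identity $\mathcal{F}_p=\mathcal{G}_p+\tfrac{1}{2}a_p a_p^{\mathrm{T}}$, immediate from $g_\theta=f_\theta/b(\theta)$ and the definitions of $\mathcal{F}_p$, $\mathcal{G}_p$, $a_p$, converts this sandwich into the form $\mathcal{J}(\theta_0,\sigma_0)^{-1}$ announced in the statement; Fisher efficiency then follows by matching with the LAN information matrix established in the Appendix under the admissible rate $\phi_N$.

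The main obstacle is the bookkeeping in the third step: one must show that the $\log\delta$-divergent coefficient multiplying $\hat{H}_N-H_0$ in the expansion of $\hat{\sigma}_N-\sigma_0$ is exactly compensated, after rescaling by $\phi_N^{-1}$, by the off-diagonal entries of $E^{-1}$ supplied by Assumption \ref{assump.rate}, so that the scaled error vector admits a genuine Gaussian limit rather than a degenerate or exploding one. A secondary point is to verify that all quadratic remainders in the expansions of $\delta^{H_0-\hat{H}_N}$ and $\log b(\hat{\theta}_N)$, after multiplication by $\phi_N^{-1}$, are uniformly $o_p(1)$; this uses the compactness of $\Theta$, the smoothness of $b$ guaranteed by $(A.\ref{A.decomposition})$, and the $\sqrt{N}$-consistency of $\hat{\theta}_N$ from Theorem \ref{asynormal.LRR}.
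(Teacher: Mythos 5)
Your proposal follows essentially the same route as the paper: the paper likewise expands $\hat{\sigma}_N-\sigma_0=(\widetilde{\sigma}_N-\sigma_0)-\sigma_0\log\delta\,(\hat{H}_N-H_0)-\tfrac{\sigma_0}{2}a_p(\theta_0)^{\mathrm{T}}(\hat{\theta}_N-\theta_0)+o_{P}(N^{-1/2})$ (merely routed through the intermediate quantity $\acute{\sigma}_N$ and Lemma \ref{acutesigmaCLT}), rescales by $\phi_N^{-1}$ so that the $E$-block of Assumption \ref{assump.rate} cancels the $\log\delta$ divergence, and concludes from Theorem \ref{asynormal.LRR}, Lemma \ref{mat.eq.} and the LAN Theorem \ref{thm.LAN}. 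The one point to tighten is your remainder: since the column of $\phi_{N,2}^{-1}$ acting on $\hat{\sigma}_N-\sigma_0$ has entries of order $\sqrt{N}$, the error in that expansion must be $o_{P}(N^{-1/2})$ rather than $o_{P}(|\log\delta|/\sqrt{N})$, which indeed holds because the quadratic remainders are $O_{P}((\log\delta)^2/N)$ and $(\log\delta)^2=o(\sqrt{N})$ under $\inf_{\delta}N\delta>0$.
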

		\begin{proof}
			At first, it holds that
			\begin{align}
				\phi_N^{-1}
				\begin{pmatrix}
					\hat{\theta}_N-\theta_0\\
					\hat{\sigma}_N-\sigma_0
				\end{pmatrix}
				&=
				\begin{pmatrix}
					\phi_{N, 1}^{-1}(\hat{\psi}_N-\psi_0) \\
					\phi_{N, 2}^{-1}
					\begin{pmatrix}
						\hat{H}_N-H_0 \\
						\hat{\sigma}_N-\sigma_0
					\end{pmatrix}
				\end{pmatrix}, \label{invm.decomp.}
			\end{align}
			where $\phi_{N, 1}^{-1}$, $\phi_{N, 2}^{-1}$ are inverse matrices of $\phi_{N, 1}$, $\phi_{N, 2}$ respectively, that is, 
			\begin{align*}
				\phi_{N, 1}^{-1}=\mathrm{diag}\left(\frac{1}{d_N^{(1)}}, \cdots, \frac{1}{d_N^{(p-1)}}\right), \phi_{N, 2}^{-1}=\frac{1}{\mathrm{det}(\phi_{N, 2})}
				\begin{bmatrix}
					\hat{\beta}_N&-\hat{\alpha}_N \\
					-\beta_N&\alpha_N
				\end{bmatrix}.
			\end{align*}
			Here, $\mathrm{det}(\phi_{N, 2})$ is calculated as 
			\begin{align*}
				&\frac{\sigma_0}{N}\left(\sqrt{N}\alpha_N\frac{\sqrt{N}\hat{\beta}_N}{\sigma_0}-\sqrt{N}\hat{\alpha}_N\frac{\sqrt{N}\beta_N}{\sigma_0}\right) \\
				=&\frac{\sigma_0}{N}\left\{\sqrt{N}\alpha_N\left(\hat{\alpha}_N\sqrt{N}\log\delta+\frac{\sqrt{N}\hat{\beta}_N}{\sigma_0}\right)
				-\sqrt{N}\hat{\alpha}_N\left(\sqrt{N}\alpha_N\log\delta+\frac{\sqrt{N}\beta_N}{\sigma_0}\right)\right\} \\
				=&\frac{\sigma_0}{N}\left(\sqrt{N}\alpha_N\hat{\gamma}_N-\sqrt{N}\hat{\alpha}_N\gamma_N\right).
			\end{align*}
			Set $\acute{\sigma}_N=\sqrt{(\delta^{2H_0} b(\hat{\theta}_N))^{-1}{\sigma}_N^2(\hat{\theta}_N)}$. 
			Then, the estimation error of $\hat{\sigma}_N-\sigma_0$ is expanded as follows by using the Taylor's formula.
			\begin{align}
				&\hat{\sigma}_N-\sigma_0 \nonumber\\
				=&\sigma_0\left(\log\hat{\sigma}_N-\log\sigma_0\right)+o_{P_{\theta_0, \sigma_0}^{(\delta)}}(N^{-\frac{1}{2}}) \nonumber\\
				=&\sigma_0\left\{\log\acute{\sigma}_N-\log\sigma_0\right\}
				-\sigma_0\log\delta\left(\hat{H}_N-H_0\right)+o_{P_{\theta_0, \sigma_0}^{(\delta)}}(N^{-\frac{1}{2}})\nonumber\\
				=&\acute{\sigma}_N-\sigma_0-\sigma_0\log\delta\left(\hat{H}_N-H_0\right)+o_{P_{\theta_0, \sigma_0}^{(\delta)}}(N^{-\frac{1}{2}}). \label{asyexpsigma}
			\end{align}
			Therefore, the following asymptotic expansion holds. 
			\begin{align*}
				&\phi_{N, 2}^{-1}
				\begin{pmatrix}
					\hat{H}_N-H_0 \\
					\hat{\sigma}_N-\sigma_0
				\end{pmatrix}\\
				=&\frac{1}{\mathrm{det}(\phi_{N, 2})}
				\begin{pmatrix}
					\hat{\beta}_N(\hat{H}_N-H_0) -\hat{\alpha}_N(\hat{\sigma}_N-\sigma_0) \\
					-\beta_N(\hat{H}_N-H_0) +\alpha_N(\hat{\sigma}_N-\sigma_0)
				\end{pmatrix}\\
				=&\frac{1}{\mathrm{det}(\phi_{N, 2})}
				\begin{pmatrix}
					(\hat{\beta}_N+\sigma_0\hat{\alpha}_N\log\delta)(\hat{H}_N-H_0) -\hat{\alpha}_N(\acute{\sigma}_N-\sigma_0) \\
					-(\beta_N+\sigma_0\alpha_N\log\delta)(\hat{H}_N-H_0) +\alpha_N(\acute{\sigma}_N-\sigma_0)
				\end{pmatrix}
				+o_{P_{\sigma_0, \theta_0}^{(\delta)}}(1) \\
				=&\frac{\sigma_0}{N\mathrm{det}(\phi_{N, 2})}
				\begin{pmatrix}
					\hat{\gamma}_N\sqrt{N}(\hat{H}_N-H_0) -\sqrt{N}\hat{\alpha}_N\frac{\sqrt{N}}{\sigma_0}(\acute{\sigma}_N-\sigma_0) \\
					-\gamma_N\sqrt{N}(\hat{H}_N-H_0) +\sqrt{N}\alpha_N\frac{\sqrt{N}}{\sigma_0}(\acute{\sigma}_N-\sigma_0)
				\end{pmatrix}
				+o_{P_{\sigma_0, \theta_0}^{(\delta)}}(1)\\
				=&\frac{\sigma_0}{N\mathrm{det}(\phi_{N, 2})}
				\begin{pmatrix}
					\hat{\gamma}_N& -\sqrt{N}\hat{\alpha}_N \\
					-\gamma_N& \sqrt{N}\alpha_N
				\end{pmatrix}
				\begin{pmatrix}
					\sqrt{N}(\hat{H}_N-H_0) \\
					\frac{\sqrt{N}}{\sigma_0}(\acute{\sigma}_N-\sigma_0) 
				\end{pmatrix}
				+o_{P_{\sigma_0, \theta_0}^{(\delta)}}(1) \\
				=&\left(E_N^{\mathrm{T}}\right)^{-1}
				\begin{pmatrix}
					\sqrt{N}(\hat{H}_N-H_0) \\
					\frac{\sqrt{N}}{\sigma_0}(\acute{\sigma}_N-\sigma_0) 
				\end{pmatrix}
				+o_{P_{\sigma_0, \theta_0}^{(\delta)}}(1),
			\end{align*}
			where
			\begin{align*}
				E_N=E_N(\theta_0, \sigma_0)=
				\begin{pmatrix}
					\sqrt{N}\alpha_N& \gamma_N \\
					\sqrt{N}\hat{\alpha}_N& \hat{\gamma}_N
				\end{pmatrix}.
			\end{align*}
			Note that we use (\ref{asyexpsigma}) and $\mathrm{det}(\phi_{N, 2})=O(N)$ in the second equality. 
			From this asymptotic expansion and (\ref{invm.decomp.}), it holds that
			\begin{align*}
				&\phi_N^{-1}
				\begin{pmatrix}
					\hat{\theta}_N-\theta_0\\
					\hat{\sigma}_N-\sigma_0
				\end{pmatrix}
				=&
				\begin{pmatrix}
					\sqrt{N}\phi_{N, 1}& 0_{p-1\times 2} \\
					0_{2\times p-1}& E_N^{\mathrm{T}}
				\end{pmatrix}^{-1}
				\begin{pmatrix}
					\sqrt{N}(\hat{\theta}_N-\theta_0) \\
					\frac{\sqrt{N}}{\sigma_0}(\acute{\sigma}_N-\sigma_0) 
				\end{pmatrix}
				+o_{P_{\sigma_0, \theta_0}^{(\delta)}}(1).
			\end{align*}
			Then, from Lemma \ref{acutesigmaCLT} in Appendix, $E_N\to E$, $\sqrt{N}\phi_{N, 1}\to D$ in matrix norm as $\delta\to 0$ 
			and the continuous mapping theorem, 
			it converges to a normal distribution with mean vector $0_{p+1\times 1}$ and covariance matrix given by the inverse matrix of 
			\begin{align*}
				&
				\begin{pmatrix}
					D& 0_{p-1\times 2} \\
					0_{2\times p-1}& E^{\mathrm{T}}
				\end{pmatrix}^{\mathrm{T}}
				\begin{pmatrix}
					\mathcal{F}_p(\theta)& a_p(\theta) \\
					a_p(\theta)^{\mathrm{T}}& 2
				\end{pmatrix}
				\begin{pmatrix}
					D& 0_{p-1\times 2} \\
					0_{2\times p-1}& E^{\mathrm{T}}
				\end{pmatrix} \\
				=&
				\begin{pmatrix}
					D& 0_{p-1\times 2} \\
					0_{2\times p-1}& E
				\end{pmatrix}
				\begin{pmatrix}
					\mathcal{F}_p(\theta)& a_p(\theta) \\
					a_p(\theta)^{\mathrm{T}}& 2
				\end{pmatrix}
				\begin{pmatrix}
					D& 0_{p-1\times 2} \\
					0_{2\times p-1}& E
				\end{pmatrix}^{\mathrm{T}}.
			\end{align*}
			The conclusion follows from the above CLT and Theorem \ref{thm.LAN} in Appendix.
		\end{proof}
		
		\subsection{Only the diffusion parameter $\sigma$ is known}
		Our purpose in this section is to construct an asymptotically efficient estimator for the parameter $\theta=(\psi, H)$ 
		in Fisher's sense in the case that the true value of the diffusion parameter $\sigma_0$ is known. 
		Namely, we construct an estimator $(\hat{\psi}_N, \hat{H}_N)$ for the parameter $(\psi, H)$ satisfying the following relation:
		\begin{align}\label{eff.H.sigmaknown}
			&\mathcal{L}\left\{
			\begin{pmatrix}
				\sqrt{N}(\hat{\psi}_N - \psi_0)\\
				\sqrt{N}|\log\delta|(\hat{H}_N - H_0)
			\end{pmatrix}
			\Bigg| P_{\theta_0, \sigma_0}^{(\delta)}\right\}
			\stackrel{\delta\to 0}{\rightarrow}\mathcal{N}\left(0, \mathcal{I}(\theta_0)^{-1}\right),
		\end{align}
		where 
		\begin{align*}
			\mathcal{I}(\theta)^{-1}=
			\begin{bmatrix}
				\mathcal{G}_{p-1}(\theta)^{-1}&-\frac{1}{2}\mathcal{G}_{p-1}(\theta)^{-1}a_{p-1}(\theta) \\
				-\frac{1}{2}a_{p-1}(\theta)^{\mathrm{T}}\mathcal{G}_{p-1}(\theta)^{-1}&\frac{1}{2}+\frac{1}{4}a_{p-1}(\theta)^{\mathrm{T}}\mathcal{G}_{p-1}(\theta)^{-1}a_{p-1}(\theta)
			\end{bmatrix}.
		\end{align*}
		In fact, the relation (\ref{eff.H.sigmaknown}) means the asymptotic efficiency in Fisher's sense, 
		which is derived by Lemma \ref{mat.eq.} and Theorem \ref{Kawai.extension} in Appendix.
		Note that the optimal asymptotic variance $\mathcal{I}(\theta)^{-1}$ is independent of the estimation error 
		of the Hurst parameter as well as that of the diffusion parameter.  
		
		At first, we work under a strong assumption that a value $ b(\theta_0)$ is known.
		Then, we define an estimator $\hat{H}_N^{(0)}$ as the solution of the equation  
		\begin{align*}
			\log{\frac{\sigma^2(\hat{\theta}_N)}{\delta^{2H}\beta(\theta_0)}}-\log\sigma_0^2=0
		\end{align*}
		with respect to $H$. More precisely, we set the estimator $\hat{H}_N^{(0)}$ as
		\begin{align*}
			\hat{H}_N^{(0)}=\frac{1}{2|\log\delta|}\left\{\log b(\theta_0)-\log{\sigma_N^2(\hat{\theta}_N)}+\log\sigma_0^2\right\},
		\end{align*}
		where $\hat{\theta}_N=(\hat{\psi}_N, \hat{H}_N)=(\hat{\psi}^{(1)}_N, \cdots, \hat{\psi}^{(p-1)}_N, \hat{H}_N)$ is the Whittle estimator defined in the previous section. 
		Then, we get the following theorem. 
		\begin{thm} \label{asynormal.pro.}
			Suppose Assumption \ref{model.assump.} and $ b(\theta_0)$ is known. 
			Then, $(\hat{\theta}_N, \hat{H}_N^{(0)})$ holds the following CLT:
			\begin{align*}
				&\mathcal{L}\left\{
				\begin{pmatrix}
					\sqrt{N}(\hat{\theta}_N - \theta_0)\\
					\sqrt{N}|\log\delta|(\hat{H}_N^{(0)} - H_0)
				\end{pmatrix}
				\Bigg| P_{\theta_0, \sigma_0}^{(\delta)}\right\}
				\stackrel{\delta\to 0}{\rightarrow}\mathcal{N}\left(0, \mathcal{A}^{(0)}(\theta_0)
				\right),
			\end{align*}
			where 
			\begin{align*}
				\mathcal{A}^{(0)}(\theta)=
				\begin{bmatrix}
					I_{p}&0_{p\times 1} \\
					0_{1\times p}&-\frac{1}{\sigma}
				\end{bmatrix}
				\begin{bmatrix}
					\mathcal{G}_p(\theta)^{-1}&0_{p\times 1} \\
					0_{1\times p}&\frac{\sigma^2}{2}
				\end{bmatrix}
				\begin{bmatrix}
					I_{p}&0_{p\times 1} \\
					0_{1\times p}&-\frac{1}{\sigma}
				\end{bmatrix}^{\mathrm{T}}
				=\begin{bmatrix}
					\mathcal{G}_p(\theta)^{-1}&0 \\
					0&\frac{1}{2}
				\end{bmatrix}.
			\end{align*}
			In particular, the estimator $(\hat{\psi}_N, \hat{H}_N^{(0)})$ is asymptotically normal. 
		\end{thm}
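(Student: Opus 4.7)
The plan is to reduce this CLT entirely to the one already established in Theorem~\ref{asynormal.LRR} for $(\hat{\theta}_N, \widetilde{\sigma}_N)$. The key observation is that the construction of $\hat{H}_N^{(0)}$ was engineered precisely so that its error is a simple function of $\log\widetilde{\sigma}_N - \log\sigma_0$, so no new probabilistic analysis is needed beyond Theorem~\ref{asynormal.LRR} and an application of the delta method together with Slutsky's theorem.

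First I would rewrite $\hat{H}_N^{(0)}-H_0$ algebraically using the definition $\widetilde{\sigma}_N^2 = (\delta^{2H_0} b(\theta_0))^{-1}\sigma_N^2(\hat{\theta}_N)$. Taking the log gives $\log\sigma_N^2(\hat{\theta}_N) = 2H_0\log\delta + \log b(\theta_0) + 2\log\widetilde{\sigma}_N$; substituting into the explicit expression for $\hat{H}_N^{(0)}$ and using $|\log\delta|=-\log\delta$ for small $\delta$ produces the identity
\begin{align*}
\hat{H}_N^{(0)} - H_0 = -\frac{\log\widetilde{\sigma}_N - \log\sigma_0}{|\log\delta|}.
\end{align*}
In particular $b(\theta_0)$ cancels completely, so it only affects the centering, not the fluctuation.

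Next I would apply the delta method to $\log$ at $\sigma_0$. Since Theorem~\ref{asynormal.LRR} gives $\widetilde{\sigma}_N - \sigma_0 = O_{P_{\theta_0,\sigma_0}^{(\delta)}}(N^{-1/2})$, in particular $\widetilde{\sigma}_N \to \sigma_0$ in probability, and the Taylor expansion of $\log$ yields
\begin{align*}
\sqrt{N}|\log\delta|(\hat{H}_N^{(0)} - H_0) = -\frac{\sqrt{N}}{\sigma_0}(\widetilde{\sigma}_N - \sigma_0) + o_{P_{\theta_0,\sigma_0}^{(\delta)}}(1).
\end{align*}
Combining this with the trivial identity for the $\hat{\theta}_N$ component, the joint vector
$(\sqrt{N}(\hat{\theta}_N-\theta_0)^{\mathrm{T}},\,\sqrt{N}|\log\delta|(\hat{H}_N^{(0)}-H_0))^{\mathrm{T}}$
equals $T\cdot(\sqrt{N}(\hat{\theta}_N-\theta_0)^{\mathrm{T}},\,\sqrt{N}(\widetilde{\sigma}_N-\sigma_0))^{\mathrm{T}} + o_{P_{\theta_0,\sigma_0}^{(\delta)}}(1)$ where $T=\mathrm{diag}(I_p, -1/\sigma_0)$.

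Finally, applying Theorem~\ref{asynormal.LRR} together with Slutsky's theorem and the continuous mapping theorem immediately gives asymptotic normality with covariance matrix $T\,\mathrm{diag}(\mathcal{G}_p(\theta_0)^{-1},\sigma_0^2/2)\,T^{\mathrm{T}}$; a direct multiplication produces exactly the block-diagonal matrix $\mathcal{A}^{(0)}(\theta_0)=\mathrm{diag}(\mathcal{G}_p(\theta_0)^{-1},1/2)$ stated in the theorem. There is essentially no obstacle here: the main content was Theorem~\ref{asynormal.LRR}, and the only care needed is verifying the Taylor remainder is $o_P(1)$ after multiplication by $\sqrt{N}$, which is automatic from $\sqrt{N}$-consistency of $\widetilde{\sigma}_N$.
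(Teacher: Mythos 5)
Your proposal is correct and follows essentially the same route as the paper: both rewrite $\hat{H}_N^{(0)}-H_0$ as $-(\log\widetilde{\sigma}_N-\log\sigma_0)/|\log\delta|$, apply the delta method to obtain $\sqrt{N}|\log\delta|(\hat{H}_N^{(0)}-H_0)=-\frac{\sqrt{N}}{\sigma_0}(\widetilde{\sigma}_N-\sigma_0)+o_{P_{\theta_0,\sigma_0}^{(\delta)}}(1)$, and then push Theorem \ref{asynormal.LRR} through the linear map $\mathrm{diag}(I_p,-1/\sigma_0)$ via the continuous mapping theorem. No substantive differences.
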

		\begin{proof}
			From the definition of $\hat{H}_N^{(0)}$ and the delta-method, it holds that
			\begin{align}
				\sqrt{N}|\log{\delta}|\left(\hat{H}_N^{(0)}-H_0\right)&=-\frac{\sqrt{N}}{2}\left\{\log\frac{\sigma_N^2(\hat{\theta}_N)}{\delta^{2H_0} b(\theta_0)}-\log\sigma_0^2\right\} \nonumber\\
				&=-\frac{\sqrt{N}}{\sigma_0}\left(\widetilde{\sigma}_N-{\sigma}_0\right)+o_{P_{\theta_0, \sigma_0}^{(\delta)}}(1). \label{Hurst0.delta}
			\end{align}
			Then, the following asymptotic expansion holds. 
			\begin{align*}
				\begin{pmatrix}
					\sqrt{N}(\hat{\theta}_N - \theta_0)\\
					\sqrt{N}|\log\delta|(\hat{H}_N^{(0)} - H_0)
				\end{pmatrix}
				&=
				\begin{bmatrix}
					I_{p}&0_{p\times 1} \\
					0_{1\times p}&-\frac{1}{\sigma_0}
				\end{bmatrix}
				\cdot\sqrt{N}
				\begin{pmatrix}
					\hat{\theta}_N-\theta_0 \\
					\widetilde{\sigma}_N-{\sigma}_0
				\end{pmatrix}
				+o_{P_{\theta_0, \sigma_0}^{(\delta)}}(1).
			\end{align*}
			Therefore, the first claim follows from Theorem \ref{asynormal.LRR} and the continuous mapping theorem. 
			Moreover, the second claim also follows from the first one and the continuous mapping theorem.
		\end{proof}
		
		The assumption that $ b(\theta_0)$ is known does not hold even in the case of the fractional Gaussian noise 
		without the appropriate selection of the parameter space satisfying the relation (\ref{specnormal}). 
		Theorem \ref{asynormal.pro.}, however, gives an idea to construct estimators 
		because the convergence rate and the asymptotic variance 
		of the estimator $\hat{H}_N^{(0)}$ achieve the efficient ones in the case of the fractional Gaussian noise derived in Kawai (2013). 
		Now, we remove the assumption that $ b(\theta_0)$ is known. 
		Then, we consider to substitute the Whittle estimator $\hat{\theta}_N$ to the true value $\theta_0$ in $ b(\theta_0)$. 
		Namely, we define an estimator $\hat{H}_N^{(1)}$ as
		\begin{align*}
			&\hat{H}_N^{(1)}=\frac{1}{2|\log\delta|}\left\{\log b(\hat{\theta}_N)-\log{\sigma_N^2(\hat{\theta}_N)}+\log\sigma_0^2\right\}. 
		\end{align*}
		Then, we can prove asymptotic normality of the estimator $(\hat{\psi}_N, \hat{H}_N^{(1)})$ as follows. 
		\begin{thm} \label{asympnormal.H}
			Suppose Assumption \ref{model.assump.}. Then, $(\hat{\theta}_N, \hat{H}_N^{(1)})$ holds the following CLT:
			\begin{align*}
				&\mathcal{L}\left\{
				\begin{pmatrix}
					\sqrt{N}(\hat{\theta}_N - \theta_0)\\
					\sqrt{N}|\log\delta|(\hat{H}_N^{(1)} - H_0)
				\end{pmatrix}
				\Bigg| P_{\theta_0, \sigma_0}^{(\delta)}\right\}
				\stackrel{\delta\to 0}{\rightarrow}\mathcal{N}\left(0, \mathcal{A}^{(1)}(\theta_0)
				\right),
			\end{align*}
			where 
			\begin{align}\label{asympnormal.H.var.}
				\mathcal{A}^{(1)}(\theta)&=
				\begin{bmatrix}
					\mathcal{G}_p(\theta)^{-1}&-\frac{1}{2}\mathcal{G}_p(\theta)^{-1}a_p(\theta) \\
					-\frac{1}{2}a_p(\theta)^{\mathrm{T}}\mathcal{G}_p(\theta)^{-1}&\frac{1}{2}+\frac{1}{4}a_p(\theta)^{\mathrm{T}}\mathcal{G}_p(\theta)^{-1}a_p(\theta)
				\end{bmatrix}.
			\end{align}
			In particular, the estimator $(\hat{\psi}_N, \hat{H}_N^{(1)})$ is asymptotically normal. 
		\end{thm}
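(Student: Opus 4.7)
The plan is to bootstrap from Theorem \ref{asynormal.pro.}: the estimator $\hat{H}_N^{(1)}$ differs from the idealized estimator $\hat{H}_N^{(0)}$ only through the replacement of the unknown constant $b(\theta_0)$ by $b(\hat{\theta}_N)$, and this replacement can be controlled by a one-term Taylor expansion together with the already-established $\sqrt{N}$-consistency of $\hat{\theta}_N$.

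First, I would subtract the defining expressions of $\hat{H}_N^{(1)}$ and $\hat{H}_N^{(0)}$ to obtain the identity
\begin{align*}
\sqrt{N}|\log\delta|\bigl(\hat{H}_N^{(1)}-\hat{H}_N^{(0)}\bigr)
=\frac{\sqrt{N}}{2}\bigl(\log b(\hat{\theta}_N)-\log b(\theta_0)\bigr),
\end{align*}
then expand $\log b$ around $\theta_0$. The regularity in $(A.\ref{A.decomposition})$ allows differentiation under the integral, so $\nabla_\theta \log b(\theta)=a_p(\theta)^{\mathrm{T}}$ (understood as a column vector via the definition of $a_p$ in Section 4.1). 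Since $\hat{\theta}_N-\theta_0=O_{P_{\theta_0,\sigma_0}^{(\delta)}}(N^{-1/2})$ by Theorem \ref{asynormal.LRR} and $\log b$ is $C^2$, the Taylor remainder is $O_P(N^{-1/2})$ and hence
\begin{align*}
\sqrt{N}|\log\delta|\bigl(\hat{H}_N^{(1)}-\hat{H}_N^{(0)}\bigr)
=\tfrac{1}{2}a_p(\theta_0)\sqrt{N}(\hat{\theta}_N-\theta_0)
+o_{P_{\theta_0,\sigma_0}^{(\delta)}}(1).
\end{align*}

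Next I would package this as a linear transformation of the vector whose joint limit is given by Theorem \ref{asynormal.pro.}. Writing
\begin{align*}
\begin{pmatrix}\sqrt{N}(\hat{\theta}_N-\theta_0)\\ \sqrt{N}|\log\delta|(\hat{H}_N^{(1)}-H_0)\end{pmatrix}
= M\begin{pmatrix}\sqrt{N}(\hat{\theta}_N-\theta_0)\\ \sqrt{N}|\log\delta|(\hat{H}_N^{(0)}-H_0)\end{pmatrix}
+o_{P_{\theta_0,\sigma_0}^{(\delta)}}(1),
\qquad
M=\begin{pmatrix}I_p & 0_{p\times 1}\\ \tfrac{1}{2}a_p(\theta_0) & 1\end{pmatrix},
\end{align*}
Theorem \ref{asynormal.pro.} together with Slutsky and the continuous mapping theorem yields the required asymptotic normality with covariance $M\mathcal{A}^{(0)}(\theta_0)M^{\mathrm{T}}$. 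A direct block computation using $\mathcal{A}^{(0)}(\theta_0)=\mathrm{diag}(\mathcal{G}_p(\theta_0)^{-1},1/2)$ produces the four blocks of $\mathcal{A}^{(1)}(\theta_0)$ displayed in (\ref{asympnormal.H.var.}), matching the stated form. The second (marginal) claim follows from the first by projecting onto the $(\hat{\psi}_N,\hat{H}_N^{(1)})$-coordinates.

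The hard part is conceptually very modest: the only non-routine ingredient is controlling $\log b(\hat{\theta}_N)-\log b(\theta_0)$ by its linearization. This is a genuine obstacle only to the extent that one must justify differentiation under the integral sign on the compact set $\Theta$ and the uniform-in-$\theta$ bound on the Hessian of $\log b$, both of which follow from $(A.\ref{A.decomposition})$ and $(A.\ref{spd.asyrelation})$ via compactness. No extension of the Whittle theory beyond what was already used in Theorem \ref{asynormal.LRR} is needed; everything else is linear algebra and the continuous mapping theorem.
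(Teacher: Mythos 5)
Your proposal is correct and is essentially the paper's argument in different packaging: both proofs reduce to the delta-method linearization $\log b(\hat{\theta}_N)-\log b(\theta_0)=a_p(\theta_0)^{\mathrm{T}}(\hat{\theta}_N-\theta_0)+o_{P}(N^{-1/2})$ followed by applying a fixed linear map to the CLT of Theorem \ref{asynormal.LRR}; the paper merely routes the correction through $\acute{\sigma}_N$ and Lemma \ref{acutesigmaCLT} (which contains exactly your Taylor step) rather than through $\hat{H}_N^{(0)}$ and Theorem \ref{asynormal.pro.}. One caveat: carried out literally, your block computation of $M\mathcal{A}^{(0)}(\theta_0)M^{\mathrm{T}}$ yields $+\tfrac{1}{2}\mathcal{G}_p(\theta_0)^{-1}a_p(\theta_0)$ in the off-diagonal blocks, not the $-\tfrac{1}{2}$ displayed in (\ref{asympnormal.H.var.}); the paper's own chain (Lemma \ref{acutesigmaCLT} composed with multiplication by $-1$ in the last coordinate) produces the same $+$ sign, so this appears to be a sign slip in the paper's stated covariance rather than a gap in your reasoning, but you should verify the blocks explicitly rather than assert that they match the stated form.
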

		
		\begin{proof}
			In the same way as (\ref{Hurst0.delta}), it holds that
			\begin{align*}
				\sqrt{N}|\log{\delta}|\left(\hat{H}_N^{(1)}-H_0\right)
				=&-\frac{\sqrt{N}}{2}\left\{\log\frac{\sigma_N^2(\hat{\theta}_N)}{\delta^{2H_0} b(\hat{\theta}_N)}-\log\sigma_0^2\right\} \\
				=&-\frac{\sqrt{N}}{\sigma_0}\left(\sqrt{\frac{\sigma_N^2(\hat{\theta}_N)}{\delta^{2H_0} b(\hat{\theta}_N)}}-{\sigma}_0\right)+o_{P_{\theta_0, \sigma_0}^{(\delta)}}(1).
			\end{align*}
			The conclusions follow from Lemma \ref{acutesigmaCLT} and 
			the similar arguments as the last of the proof in Theorem \ref{asynormal.pro.}.
		\end{proof}
		
		Unfortunately, the estimator $(\hat{\psi}_N, \hat{H}_N^{(1)})$ is not asymptotically efficient 
		because the asymptotic covariance matrix of this estimator depends on the estimation error of the Hurst parameter; 
		compare (\ref{eff.H.sigmaknown}) with (\ref{asympnormal.H.var.}).
		However, we can construct an asymptotically efficient estimator by using the estimator $(\hat{\psi}_N, \hat{H}_N^{(1)})$. 
		Namely, we define an estimator $\hat{H}_N^{(2)}$ as
		\begin{align*}
			\hat{H}_N^{(2)}=\frac{1}{2|\log\delta|}\left\{\log b(\hat{\theta}_N^{(1)})-\log{\sigma_N^2(\hat{\theta}_N)}+\log\sigma_0^2\right\}, 
		\end{align*}
		where $\hat{\theta}_N^{(1)}=(\hat{\psi}_N, \hat{H}_N^{(1)})$. 
		Then, we can prove the asymptotic efficiency of the estimator $(\hat{\psi}_N, \hat{H}_N^{(2)})$ as follows.
		\begin{thm}
			Suppose Assumption \ref{model.assump.}. 
			Then, the estimator $(\hat{\psi}_N, \hat{H}_N^{(2)})$  is asymptotically efficient in Fisher's sense, that is, 
			the estimator $(\hat{\psi}_N, \hat{H}_N^{(2)})$ satisfies the relation (\ref{eff.H.sigmaknown}).
		\end{thm}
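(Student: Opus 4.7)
The plan is to mimic the proofs of Theorems \ref{asynormal.pro.} and \ref{asympnormal.H}, only with the refined plug-in $b(\hat{\theta}_N^{(1)})$ in place of $b(\theta_0)$ or $b(\hat{\theta}_N)$. First I would reproduce the algebra behind (\ref{Hurst0.delta}) for $\hat{H}_N^{(2)}$, obtaining
\[
2|\log\delta|(\hat{H}_N^{(2)} - H_0) = \log b(\hat{\theta}_N^{(1)}) - \log b(\theta_0) - 2\log(\widetilde{\sigma}_N/\sigma_0).
\]
Taylor-expanding $\log b$ around $\theta_0$ (justified by the $\mathcal{C}^{3,1}$-regularity from $(A.\ref{A.decomposition})$, with gradient $\nabla \log b(\theta) = a_p(\theta)$ since $\log b(\theta) = \frac{1}{2\pi}\int \log f_\theta\, d\lambda$) and expanding the scalar logarithm around $\sigma_0$ yields
\[
\sqrt{N}|\log\delta|(\hat{H}_N^{(2)} - H_0) = \tfrac{1}{2}\, a_p(\theta_0)^{\mathrm{T}} \sqrt{N}(\hat{\theta}_N^{(1)} - \theta_0) - \frac{\sqrt{N}}{\sigma_0}(\widetilde{\sigma}_N - \sigma_0) + o_{P_{\theta_0, \sigma_0}^{(\delta)}}(1).
\]

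The key new ingredient, and the reason the iteration succeeds, is that the $H$-coordinate of $\hat{\theta}_N^{(1)}$ converges at the fast rate $\sqrt{N}|\log\delta|$ by Theorem \ref{asympnormal.H}, so $\sqrt{N}(\hat{H}_N^{(1)} - H_0) = O_{P_{\theta_0, \sigma_0}^{(\delta)}}(|\log\delta|^{-1}) = o_{P_{\theta_0, \sigma_0}^{(\delta)}}(1)$. Hence the $H$-component of $a_p(\theta_0)^{\mathrm{T}}\sqrt{N}(\hat{\theta}_N^{(1)} - \theta_0)$ is negligible and
\[
\sqrt{N}|\log\delta|(\hat{H}_N^{(2)} - H_0) = \tfrac{1}{2}\, a_{p-1}(\theta_0)^{\mathrm{T}} \sqrt{N}(\hat{\psi}_N - \psi_0) - \frac{\sqrt{N}}{\sigma_0}(\widetilde{\sigma}_N - \sigma_0) + o_{P_{\theta_0, \sigma_0}^{(\delta)}}(1).
\]
Compared with the analogue for $\hat{H}_N^{(1)}$ implicit in Theorem \ref{asympnormal.H}, the gradient $a_p$ has been replaced by its $\psi$-restriction $a_{p-1}$, which is exactly the change that should take the $a_p$-dependent entries of $\mathcal{A}^{(1)}$ to the $a_{p-1}$-dependent entries of $\mathcal{I}(\theta_0)^{-1}$.

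To conclude, I would stack with $\sqrt{N}(\hat{\psi}_N - \psi_0)$ and observe that
\[
\begin{pmatrix} \sqrt{N}(\hat{\psi}_N - \psi_0) \\ \sqrt{N}|\log\delta|(\hat{H}_N^{(2)} - H_0) \end{pmatrix} = M \begin{pmatrix} \sqrt{N}(\hat{\psi}_N - \psi_0) \\ \sqrt{N}(\widetilde{\sigma}_N - \sigma_0) \end{pmatrix} + o_{P_{\theta_0, \sigma_0}^{(\delta)}}(1)
\]
for a deterministic $p\times p$ matrix $M$ assembled from $I_{p-1}$, $\tfrac{1}{2}a_{p-1}(\theta_0)$, and $-1/\sigma_0$. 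Theorem \ref{asynormal.LRR} supplies the joint asymptotic normality of $(\sqrt{N}(\hat{\theta}_N - \theta_0), \sqrt{N}(\widetilde{\sigma}_N - \sigma_0))$ with block-diagonal limit covariance $\mathrm{diag}(\mathcal{G}_p(\theta_0)^{-1}, \sigma_0^2/2)$, and the continuous mapping theorem then delivers a $p$-variate normal limit. To identify this limit covariance with the stated $\mathcal{I}(\theta_0)^{-1}$ I would invoke Lemma \ref{mat.eq.} in the Appendix, which is precisely the matrix identity cited by the authors just before the theorem.

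The main obstacle is controlling the Taylor remainders in the expansion of $\log b(\hat{\theta}_N^{(1)}) - \log b(\theta_0)$: these involve products and squares of $(\hat{\psi}_N - \psi_0)$ and $(\hat{H}_N^{(1)} - H_0)$, and one must verify that after multiplication by $\sqrt{N}$ they are $o_{P_{\theta_0, \sigma_0}^{(\delta)}}(1)$ --- here the fast rate $|\log\delta|^{-1}$ of $\hat{H}_N^{(1)} - H_0$ is exactly what makes the terms involving $H$ negligible at the required order. Everything else --- the first-order linearization, the joint CLT, and the continuous mapping --- is essentially mechanical once the linearization in the first display is in hand and Lemma \ref{mat.eq.} supplies the needed algebraic reduction.
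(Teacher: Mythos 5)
Your argument is essentially identical to the paper's: the same linearization of $2|\log\delta|(\hat{H}_N^{(2)}-H_0)$ into $\log b(\hat{\theta}_N^{(1)})-\log b(\theta_0)$ minus the $\widetilde{\sigma}_N$ term (the paper's (\ref{Hurst2.error})), the same delta-method expansion of $\log b$ with the same key use of the $\sqrt{N}|\log\delta|$-consistency of $\hat{H}_N^{(1)}$ to replace $a_p(\theta_0)^{\mathrm{T}}(\hat{\theta}_N^{(1)}-\theta_0)$ by $a_{p-1}(\theta_0)^{\mathrm{T}}(\hat{\psi}_N-\psi_0)$ (the paper's (\ref{Hurst2.putinerror})), and the same final stacking plus Theorem \ref{asynormal.LRR}, the continuous mapping theorem and Lemma \ref{mat.eq.}. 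The only discrepancy is the sign $+\tfrac{1}{2}a_{p-1}^{\mathrm{T}}$ versus the paper's $-\tfrac{1}{2}a_{p-1}^{\mathrm{T}}$; your sign is the one consistent with the gradient computation $\nabla\log b=a_p$ in (\ref{Hurst1.suberr}), and it does not affect the resulting covariance identification beyond the sign of the off-diagonal block.
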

		
		\begin{proof}
			In the similar way as (\ref{Hurst0.delta}), it holds that
			\begin{align}
				&\sqrt{N}|\log{\delta}|\left(\hat{H}_N^{(2)}-H_0\right) \nonumber\\
				=&-\frac{\sqrt{N}}{\sigma_0}\left(\widetilde{\sigma}_N-{\sigma}_0\right)+\frac{\sqrt{N}}{2}\left(\log b(\hat{\theta}_N^{(1)})-\log b(\theta_0)\right)
				+o_{P_{\theta_0, \sigma_0}^{(\delta)}}(1). \label{Hurst2.error}
			\end{align}
			Then, the second term in (\ref{Hurst2.error}) is expanded as follows. 
			\begin{align}
				&\frac{\sqrt{N}}{2}\left(\log b(\hat{\theta}_N^{(1)})-\log b(\theta_0)\right) \nonumber\\
				=&-\frac{1}{2}a_p(\theta_0)^{\mathrm{T}}\sqrt{N}\left(\hat{\theta}_N^{(1)}-\theta_0\right)+o_{P_{\theta_0, \sigma_0}^{(\delta)}}(1) \nonumber\\
				=&-\frac{1}{2}a_{p-1}(\theta_0)^{\mathrm{T}}\sqrt{N}\left(\hat{\psi}_N-\psi_0\right)+o_{P_{\theta_0, \sigma_0}^{(\delta)}}(1). \label{Hurst2.putinerror}
			\end{align}
			Note that we use the $\sqrt{N}|\log\delta|$-consistency of the estimator $\hat{H}_N^{(1)}$ in the second equality. 
			Therefore, the following asymptotic expansion follows from the relations (\ref{Hurst2.error}) and (\ref{Hurst2.putinerror}).
			\begin{align*}
				\begin{pmatrix}
					\sqrt{N}(\hat{\psi}_N - \psi_0)\\
					\sqrt{N}|\log\delta|(\hat{H}_N^{(2)} - H_0)
				\end{pmatrix}
				&=
				\begin{bmatrix}
					I_{p-1}&0_{p\times 1} \\
					-\frac{1}{2}a_{p-1}(\theta_0)^{\mathrm{T}}&-\frac{1}{\sigma_0}
				\end{bmatrix}
				\cdot\sqrt{N}
				\begin{pmatrix}
					\hat{\psi}_N-\psi_0 \\
					\widetilde{\sigma}_N-{\sigma}_0
				\end{pmatrix}
				+o_{P_{\theta_0, \sigma_0}^{(\delta)}}(1).
			\end{align*}
			The conclusion follows from an easy modification of Theorem \ref{asynormal.LRR} and Lemma \ref{mat.eq.}, 
			Theorem \ref{Kawai.extension} in Appendix. 
		\end{proof}

		\appendix
		\section{Preliminary Lemmas}
		In this Appendix, we show several lemmas used in the proof of main results. 
		\begin{lem}\label{FG.relation}
			Suppose Assumption \ref{model.assump.}. 
			For $q=1, \cdots, p$, the matrices $\mathcal{F}_q(\theta)$ and $\mathcal{G}_q(\theta)$ are connected with the following relation.
			\begin{align*}
				\mathcal{G}_q(\theta)=\mathcal{F}_q(\theta)-\frac{1}{2}a_q(\theta)a_q(\theta)^{\mathrm{T}}.
			\end{align*}
		\end{lem}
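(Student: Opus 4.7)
The plan is to prove the identity by direct computation: substitute the definition $\log g_\theta(\lambda)=\log f_\theta(\lambda)-\log b(\theta)$ into the definition of $\mathcal{G}_q(\theta)$, expand the resulting product, and exploit the fact that $\log b(\theta)$ is constant in $\lambda$, so that differentiating it in $\theta_j$ produces exactly the component $(a_q(\theta))_j$.

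More precisely, first I would note that by the Remark following Assumption \ref{model.assump.}, one may exchange the $\theta_j$-derivative with the $\lambda$-integral in the definition of $b(\theta)$, giving
\begin{align*}
\frac{\partial}{\partial\theta_j}\log b(\theta)=\frac{1}{2\pi}\int_{-\pi}^{\pi}\frac{\partial}{\partial\theta_j}\log f_\theta(\lambda)\, d\lambda=(a_q(\theta))_j.
\end{align*}
Hence, writing $A_j(\lambda):=\partial_{\theta_j}\log f_\theta(\lambda)$ and $\bar A_j:=(a_q(\theta))_j$, we have $\partial_{\theta_j}\log g_\theta(\lambda)=A_j(\lambda)-\bar A_j$.

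Next, plugging this into the $(j,k)$-entry of $\mathcal{G}_q(\theta)$ gives
\begin{align*}
(\mathcal{G}_q(\theta))_{jk}
=\frac{1}{4\pi}\int_{-\pi}^{\pi}\bigl(A_j(\lambda)-\bar A_j\bigr)\bigl(A_k(\lambda)-\bar A_k\bigr)\, d\lambda.
\end{align*}
Expanding the product yields four terms: the first is $(\mathcal{F}_q(\theta))_{jk}$ by definition, and the two cross terms each evaluate to $-\tfrac{1}{2}\bar A_j\bar A_k$ upon pulling the constants $\bar A_j$, $\bar A_k$ out of the integral and using $\frac{1}{4\pi}\int_{-\pi}^{\pi}A_\ell(\lambda)\,d\lambda=\tfrac{1}{2}\bar A_\ell$, while the last term gives $+\tfrac{1}{2}\bar A_j\bar A_k$ because $\frac{1}{4\pi}\int_{-\pi}^{\pi}d\lambda=\tfrac{1}{2}$. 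Summing the four contributions yields $(\mathcal{F}_q(\theta))_{jk}-\tfrac{1}{2}\bar A_j\bar A_k$, i.e.\ $(\mathcal{F}_q(\theta)-\tfrac{1}{2}a_q(\theta)a_q(\theta)^{\mathrm T})_{jk}$, which is the claim.

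There is no real obstacle here beyond bookkeeping: the only non-trivial point is the interchange of differentiation and integration needed to identify $\partial_{\theta_j}\log b(\theta)$ with $(a_q(\theta))_j$, and this has already been justified in the Remark under Assumption \ref{model.assump.} using the integrability of $f_\theta$ and its derivatives from $(A.\ref{spd.asyrelation})$. The rest is elementary algebra on a $q\times q$ array of scalar identities.
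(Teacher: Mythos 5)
Your proof is correct and follows essentially the same route as the paper's own argument: both substitute $\partial_{\theta_j}\log g_\theta(\lambda)=\partial_{\theta_j}\log f_\theta(\lambda)-\frac{1}{2\pi}\int_{-\pi}^{\pi}\partial_{\theta_j}\log f_\theta(\lambda)\,d\lambda$ into the definition of $\mathcal{G}_q(\theta)$ and expand the product, with the cross terms and the constant term combining to give the coefficient $-\tfrac{1}{2}$. Your explicit appeal to the Remark justifying the interchange of differentiation and integration is a small additional piece of care that the paper leaves implicit.
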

		\begin{proof}
			At first, it holds that
			\begin{align*}
				\frac{\partial}{\partial\theta_j}\log{g_\theta(\lambda)}=&\frac{\partial}{\partial\theta_j}\log{f_\theta(\lambda)}-\frac{\partial}{\partial\theta_j}\log\beta(\theta) \\
				=&\frac{\partial}{\partial\theta_j}\log{f_\theta(\lambda)}-\frac{1}{2\pi}\int_{-\pi}^\pi\frac{\partial}{\partial\theta_j}\log{f_\theta(\lambda)}\, d\lambda.
			\end{align*}
			Then, $(j, k)$-component of the matrix $\mathcal{G}_q(\theta)$ is calculated as 
			\begin{align*}
				&\frac{1}{4\pi}\int_{-\pi}^\pi\frac{\partial}{\partial\theta_j}\log{g_\theta(\lambda)}\frac{\partial}{\partial\theta_k}\log{g_\theta(\lambda)}\, d\lambda \\
				=&\frac{1}{4\pi}\int_{-\pi}^\pi\frac{\partial}{\partial\theta_j}\log{f_\theta(\lambda)}\frac{\partial}{\partial\theta_k}\log{f_\theta(\lambda)}\, d\lambda \\
				&-\left(\frac{1}{2\pi}\int_{-\pi}^\pi\frac{\partial}{\partial\theta_j}\log{f_\theta(\lambda)}\, d\lambda\right)\left(\frac{1}{2\pi}\int_{-\pi}^\pi\frac{\partial}{\partial\theta_k}\log{f_\theta(\lambda)}\, d\lambda\right) \\
				&+\frac{1}{2}\left(\frac{1}{2\pi}\int_{-\pi}^\pi\frac{\partial}{\partial\theta_j}\log{f_\theta(\lambda)}\, d\lambda\right)\left(\frac{1}{2\pi}\int_{-\pi}^\pi\frac{\partial}{\partial\theta_k}\log{f_\theta(\lambda)}\, d\lambda\right) \\
				=&\frac{1}{4\pi}\int_{-\pi}^\pi\frac{\partial}{\partial\theta_j}\log{f_\theta(\lambda)}\frac{\partial}{\partial\theta_k}\log{f_\theta(\lambda)}\, d\lambda \\
				&-\frac{1}{2}\left(\frac{1}{2\pi}\int_{-\pi}^\pi\frac{\partial}{\partial\theta_j}\log{f_\theta(\lambda)}\, d\lambda\right)\left(\frac{1}{2\pi}\int_{-\pi}^\pi\frac{\partial}{\partial\theta_k}\log{f_\theta(\lambda)}\, d\lambda\right).
			\end{align*}
			As the above calculations, we finish the proof of this lemma.
		\end{proof}
		
		\begin{lem}\label{mat.eq.}
			Suppose Assumption \ref{model.assump.}. Then, it holds that 
			\begin{align*}
				\begin{bmatrix}
					\mathcal{F}_q(\theta)& a_q(\theta) \\
					a_q(\theta)^{\mathrm{T}}& 2
				\end{bmatrix}^{-1} 
				=
				\begin{bmatrix}
					\mathcal{G}_q(\theta)^{-1}& -\frac{1}{2}\mathcal{G}_q(\theta)^{-1}a_q(\theta) \\
					-\frac{1}{2}a_q(\theta)^{\mathrm{T}}\mathcal{G}_q(\theta)^{-1}& \frac{1}{2}+\frac{1}{4}a_q(\theta)^{\mathrm{T}}\mathcal{G}_q(\theta)^{-1}a_q(\theta)
				\end{bmatrix}
			\end{align*}
			for all $q=1, \cdots, p$.
		\end{lem}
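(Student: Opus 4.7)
The plan is to recognize the right-hand side as the output of the standard block matrix inversion formula applied to the left-hand side, with Lemma \ref{FG.relation} supplying the Schur complement in closed form.

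First I would decompose the left-hand matrix in block form as $\begin{bmatrix} A & B \\ C & D \end{bmatrix}$ with $A = \mathcal{F}_q(\theta)$, $B = a_q(\theta)$, $C = a_q(\theta)^{\mathrm{T}}$, and the scalar $D = 2$. Because $D$ is a nonzero scalar, its inverse $D^{-1} = 1/2$ is available and the Schur complement with respect to $D$ is
\begin{align*}
S = A - B D^{-1} C = \mathcal{F}_q(\theta) - \tfrac{1}{2}\, a_q(\theta)\, a_q(\theta)^{\mathrm{T}}.
\end{align*}
By Lemma \ref{FG.relation}, $S = \mathcal{G}_q(\theta)$, which is precisely the block that needs to be inverted on the right-hand side.

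Second, assuming invertibility of $\mathcal{G}_q(\theta)$ (which is implicit since its inverse appears in the statement), I would apply the block inversion identity
\begin{align*}
\begin{bmatrix} A & B \\ C & D \end{bmatrix}^{-1}
= \begin{bmatrix} S^{-1} & -S^{-1} B D^{-1} \\ -D^{-1} C S^{-1} & D^{-1} + D^{-1} C S^{-1} B D^{-1} \end{bmatrix},
\end{align*}
and substitute $S = \mathcal{G}_q(\theta)$, $B = a_q(\theta)$, $C = a_q(\theta)^{\mathrm{T}}$, $D^{-1} = 1/2$. Evaluating each block gives $\mathcal{G}_q(\theta)^{-1}$ in the top-left; $-\tfrac12 \mathcal{G}_q(\theta)^{-1} a_q(\theta)$ in the top-right; $-\tfrac12 a_q(\theta)^{\mathrm{T}} \mathcal{G}_q(\theta)^{-1}$ in the bottom-left; and $\tfrac12 + \tfrac14 a_q(\theta)^{\mathrm{T}} \mathcal{G}_q(\theta)^{-1} a_q(\theta)$ in the bottom-right, matching the claimed form verbatim.

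There is no real obstacle: once Lemma \ref{FG.relation} is used to identify $\mathcal{G}_q(\theta)$ as the Schur complement of the scalar $2$, the statement is a direct calculation. As an equivalent self-contained verification, one could instead multiply the two block matrices and check that the product equals the identity, using only $\mathcal{F}_q(\theta) = \mathcal{G}_q(\theta) + \tfrac12 a_q(\theta) a_q(\theta)^{\mathrm{T}}$ from Lemma \ref{FG.relation}; this avoids citing the general block inversion formula.
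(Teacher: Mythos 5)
Your proposal is correct. The paper's own proof is exactly the ``equivalent self-contained verification'' you mention in your last sentence: it multiplies the two block matrices and checks that the product is the identity, using the relation $\mathcal{F}_q(\theta)\mathcal{G}_q(\theta)^{-1}=I_q+\frac{1}{2}a_q(\theta)a_q(\theta)^{\mathrm{T}}\mathcal{G}_q(\theta)^{-1}$, which is just Lemma \ref{FG.relation} rearranged. Your primary route via the Schur complement of the scalar block $2$ is a repackaging of the same computation, with the mild advantage that it \emph{derives} the form of the inverse rather than merely verifying a guessed answer, and it makes explicit that invertibility of the full matrix is equivalent to invertibility of $\mathcal{G}_q(\theta)$; the paper, like you, leaves that invertibility implicit. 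Either way the only substantive input is Lemma \ref{FG.relation}, so the two arguments are essentially the same.
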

		\begin{proof}
			For our purpose, it suffices to prove the following matrix
			\begin{align}\label{inv.matrixF}
				\begin{bmatrix}
					\mathcal{F}_q(\theta)& a_q(\theta) \\
					a_q(\theta)^{\mathrm{T}}& 2
				\end{bmatrix}
				\begin{bmatrix}
					\mathcal{G}_q(\theta)^{-1}& -\frac{1}{2}\mathcal{G}_q(\theta)^{-1}a_q(\theta) \\
					-\frac{1}{2}a_q(\theta)^{\mathrm{T}}\mathcal{G}_q(\theta)^{-1}& \frac{1}{2}+\frac{1}{4}a_q(\theta)^{\mathrm{T}}\mathcal{G}_q(\theta)^{-1}a_q(\theta)
				\end{bmatrix}
			\end{align}
			is equal to the unit matrix for all $q=1, \cdots, p$. Now, we take on arbitrary $q$. 
			Then, we calculate the matrix (\ref{inv.matrixF}) by using the block matrix and the following relation, 
			which is derived by Lemma \ref{FG.relation},
			\begin{align*}
				\mathcal{F}_q(\theta)\mathcal{G}_q(\theta)^{-1}=I_q+\frac{1}{2}a_q(\theta)a_q(\theta)^{\mathrm{T}}\mathcal{G}_q(\theta)^{-1}.
			\end{align*}
			At first, the $(1, 1)$-component of (\ref{inv.matrixF}) is calculated as 
			\begin{align*}
				\mathcal{F}_q(\theta)\mathcal{G}_q(\theta)^{-1}-\frac{1}{2}a_q(\theta)a_q(\theta)^{\mathrm{T}}\mathcal{G}_q(\theta)^{-1}=I_q.
			\end{align*}
			Next, the $(1, 2)$-component of (\ref{inv.matrixF}) is calculated as 
			\begin{align*}
				-\frac{1}{2}\mathcal{F}_q(\theta)\mathcal{G}_q(\theta)^{-1}a_q(\theta)+\frac{1}{2}a_q(\theta)+\frac{1}{4}a_q(\theta)a_q(\theta)^{\mathrm{T}}\mathcal{G}_q(\theta)^{-1}a_q(\theta)
				=0_{q\times 1}.
			\end{align*}
			Finally, the $(2, 2)$-component of (\ref{inv.matrixF}) is calculated as 
			\begin{align*}
				-\frac{1}{2}a_q(\theta)^{\mathrm{T}}\mathcal{G}_q(\theta)^{-1}a_q(\theta)+1+\frac{1}{2}a_q(\theta)^{\mathrm{T}}\mathcal{G}_q(\theta)^{-1}a_q(\theta)=1.
			\end{align*}
			Therefore, this Lemma follows from the above calculations. 
		\end{proof}
		Set a random variable 
		\begin{align*}
			\acute{\sigma}_N=\sqrt{(\delta^{2H_0}\beta(\hat{\theta}_N))^{-1}{\sigma}_N^2(\hat{\theta}_N)}.
		\end{align*}
		Then, the following CLT holds.
		\begin{lem}\label{acutesigmaCLT}
			Suppose Assumption \ref{model.assump.}. Then, the following CLT holds.
			\begin{align*}
				\mathcal{L}\left\{
				\begin{pmatrix}
					\sqrt{N}(\hat{\theta}_N - \theta_0) \\
					\frac{\sqrt{N}}{\sigma_0}(\acute{\sigma}_N -\sigma_0)
				\end{pmatrix}
				\bigg| P_{\theta_0, \sigma_0}^{(\delta)}\right\}
				\stackrel{\delta\to 0}{\rightarrow}\mathcal{N}\left(0,\mathcal{A}^{(1)}(\theta)\right),
			\end{align*}
			where 
			\begin{align*}
				\mathcal{A}^{(1)}(\theta)&=
				\begin{bmatrix}
					I_{p}&0_{p\times 1} \\
					-\frac{1}{2}a_p(\theta)^{\mathrm{T}}&\frac{1}{\sigma}
				\end{bmatrix}
				\begin{bmatrix}
					\mathcal{G}_p(\theta)^{-1}&0_{p\times 1} \\
					0_{1\times p}&\frac{\sigma^2}{2}
				\end{bmatrix}
				\begin{bmatrix}
					I_{p}&0_{p\times 1} \\
					-\frac{1}{2}a_p(\theta)^{\mathrm{T}}&\frac{1}{\sigma}
				\end{bmatrix}^{\mathrm{T}} \\
				&=
				\begin{bmatrix}
					\mathcal{G}_p(\theta)^{-1}&-\frac{1}{2}\mathcal{G}_p(\theta)^{-1}a_p(\theta) \\
					-\frac{1}{2}a_p(\theta)^{\mathrm{T}}\mathcal{G}_p(\theta)^{-1}&\frac{1}{2}+\frac{1}{4}a_p(\theta)^{\mathrm{T}}\mathcal{G}_p(\theta)^{-1}a_p(\theta)
				\end{bmatrix}
				=
				\begin{bmatrix}
					\mathcal{F}_p(\theta)& a_p(\theta) \\
					a_p(\theta)^{\mathrm{T}}& 2
				\end{bmatrix}^{-1}.
			\end{align*}
		\end{lem}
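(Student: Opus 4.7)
The plan is to reduce this lemma to Theorem \ref{asynormal.LRR} by writing $\acute{\sigma}_N$ as a smooth transformation of $\widetilde{\sigma}_N$ and $\hat{\theta}_N$, then applying the delta method.

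First I would observe the algebraic identity
\begin{align*}
\acute{\sigma}_N = \widetilde{\sigma}_N \cdot \sqrt{\frac{b(\theta_0)}{b(\hat{\theta}_N)}},
\end{align*}
which follows immediately from comparing the definitions of $\acute{\sigma}_N$ and $\widetilde{\sigma}_N = \sqrt{(\delta^{2H_0} b(\theta_0))^{-1} \sigma_N^2(\hat{\theta}_N)}$. Taking logarithms, this gives $\log \acute{\sigma}_N - \log \sigma_0 = (\log \widetilde{\sigma}_N - \log \sigma_0) - \tfrac{1}{2}(\log b(\hat{\theta}_N) - \log b(\theta_0))$. Since $\frac{\partial}{\partial \theta_j}\log b(\theta) = \frac{1}{2\pi}\int_{-\pi}^\pi \frac{\partial}{\partial\theta_j}\log f_\theta(\lambda)\, d\lambda = [a_p(\theta)]_j$ by the definition of $a_p$ and $b$ together with the remark allowing exchange of derivative and integral, Taylor expanding $\log b$ at $\theta_0$ yields
\begin{align*}
\log b(\hat{\theta}_N) - \log b(\theta_0) = a_p(\theta_0)^{\mathrm{T}}(\hat{\theta}_N - \theta_0) + o_{P_{\theta_0,\sigma_0}^{(\delta)}}(N^{-1/2}),
\end{align*}
using the $\sqrt{N}$-consistency of $\hat{\theta}_N$ from Theorem \ref{asynormal.LRR}. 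Applying the delta method to $\sigma \mapsto \log \sigma$ at $\sigma_0$ converts the log-differences into differences up to $o_{P_{\theta_0,\sigma_0}^{(\delta)}}(N^{-1/2})$.

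Putting these together gives the linear asymptotic representation
\begin{align*}
\begin{pmatrix} \sqrt{N}(\hat{\theta}_N - \theta_0) \\ \frac{\sqrt{N}}{\sigma_0}(\acute{\sigma}_N - \sigma_0) \end{pmatrix}
=
\begin{bmatrix} I_p & 0_{p\times 1} \\ -\frac{1}{2} a_p(\theta_0)^{\mathrm{T}} & \frac{1}{\sigma_0} \end{bmatrix}
\begin{pmatrix} \sqrt{N}(\hat{\theta}_N - \theta_0) \\ \sqrt{N}(\widetilde{\sigma}_N - \sigma_0) \end{pmatrix}
+ o_{P_{\theta_0,\sigma_0}^{(\delta)}}(1).
\end{align*}
Theorem \ref{asynormal.LRR} tells us the right-hand random vector converges in law to $\mathcal{N}(0, \mathrm{diag}(\mathcal{G}_p(\theta_0)^{-1}, \sigma_0^2/2))$, so the continuous mapping theorem yields the CLT with asymptotic covariance equal to the first (sandwich) form of $\mathcal{A}^{(1)}(\theta_0)$ displayed in the lemma.

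The second equality in the displayed form for $\mathcal{A}^{(1)}(\theta)$ is obtained by carrying out the block matrix multiplication, and the third equality, which identifies this covariance with the inverse of $\begin{bmatrix} \mathcal{F}_p(\theta) & a_p(\theta) \\ a_p(\theta)^{\mathrm{T}} & 2 \end{bmatrix}$, is exactly the content of Lemma \ref{mat.eq.} applied at $q=p$. The only real work is the Taylor expansion step with careful bookkeeping of the $o_{P_{\theta_0,\sigma_0}^{(\delta)}}(N^{-1/2})$ errors; nothing delicate arises because $\hat{\theta}_N$ is already $\sqrt{N}$-consistent and $\log b$ is smooth on the compact parameter space by the regularity assumption $(A.0)$.
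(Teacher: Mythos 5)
Your proposal is correct and follows essentially the same route as the paper's own proof: the identity $\acute{\sigma}_N=\widetilde{\sigma}_N\sqrt{b(\theta_0)/b(\hat{\theta}_N)}$, the delta method on $\log$, the first-order expansion $\log b(\hat{\theta}_N)-\log b(\theta_0)=a_p(\theta_0)^{\mathrm{T}}(\hat{\theta}_N-\theta_0)+o_{P}(N^{-1/2})$, the resulting linear representation with the matrix $\bigl[\begin{smallmatrix}I_p&0\\-\frac{1}{2}a_p(\theta_0)^{\mathrm{T}}&\frac{1}{\sigma_0}\end{smallmatrix}\bigr]$, and the conclusion via Theorem \ref{asynormal.LRR}, the continuous mapping theorem and Lemma \ref{mat.eq.} are exactly the steps in the paper. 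No gaps.
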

		\begin{proof}
			From the delta-method, it holds that
			\begin{align}
				&\frac{\sqrt{N}}{\sigma_0}(\acute{\sigma}_N -\sigma_0) \nonumber\\
				=&\sqrt{N}\left(\log\acute{\sigma}_N-\log\sigma_0\right)+o_{P_{\theta_0, \sigma_0}^{(\delta)}}(1) \nonumber\\
				=&\sqrt{N}(\log\widetilde{\sigma}_N-\log\sigma_0)
				-\frac{\sqrt{N}}{2}(\log\beta(\hat{\theta}_N)-\log\beta(\theta_0))+o_{P_{\theta_0, \sigma_0}^{(\delta)}}(1)\nonumber\\
				=&\frac{\sqrt{N}}{\sigma_0}(\widetilde{\sigma}_N-\sigma_0)-\frac{\sqrt{N}}{2}(\log\beta(\hat{\theta}_N)-\log\beta(\theta_0))+o_{P_{\theta_0, \sigma_0}^{(\delta)}}(1). \label{Hurst1.error}
			\end{align}
			Therefore, we need to check the error of the second term in (\ref{Hurst1.error}). From the delta method and the chain rule of composite functions, it holds that
			\begin{align}
				\log\beta(\hat{\theta}_N)-\log\beta(\theta_0) 
				=&\frac{1}{2\pi}\int_{-\pi}^{\pi}\log{f_{\hat{\theta}_N}(\lambda)}\, d\lambda -\frac{1}{2\pi}\int_{-\pi}^{\pi}\log{f_{\theta_0}(\lambda)}\, d\lambda \nonumber \\
				=&a_p(\theta_0)^{\mathrm{T}}\left(\hat{\theta}_N-\theta_0\right)+o_{P_{\theta_0, \sigma_0}^{(\delta)}}(N^{-\frac{1}{2}}). \label{Hurst1.suberr}
			\end{align}
			Therefore, we get the following asymptotic expansion from the relations (\ref{Hurst1.error}), (\ref{Hurst1.suberr}) and Theorem \ref{asynormal.LRR}.
			\begin{align*}
				\begin{pmatrix}
					\sqrt{N}(\hat{\theta}_N - \theta_0) \\
					\frac{\sqrt{N}}{\sigma_0}(\acute{\sigma}_N -\sigma_0)
				\end{pmatrix}
				&=
				\begin{bmatrix}
					I_{p}&0_{p\times 1} \\
					-\frac{1}{2}a_p(\theta_0)^{\mathrm{T}}&\frac{1}{\sigma_0}
				\end{bmatrix}
				\cdot\sqrt{N}
				\begin{pmatrix}
					\hat{\theta}_N-\theta_0 \\
					\widetilde{\sigma}_N-{\sigma}_0
				\end{pmatrix}
				+o_{P_{\theta_0, \sigma_0}^{(\delta)}}(1).
			\end{align*}
			Therefore, we finish the proof from the continuous mapping theorem, 
			Theorem \ref{asynormal.LRR} and Lemma \ref{mat.eq.}.
		\end{proof}
		
		\section{LAN Property under High Frequency Observations}
		In this Appendix, we show several extensions of the results in Kawai (2013) and Brouste and Fukasawa (2016) 
		into our model framework without proof. These results can be proved in the similar arguments. 
		At first, we show the extension of the result in Kawai (2013). Set a rate matrix $\bar{\phi}_N(\theta_0, \sigma_0)$ as follows.
		\begin{align*}
			\bar{\phi}_N(\theta_0, \sigma_0)=
			\begin{pmatrix}
				\frac{1}{\sqrt{N}}I_{p-1}& 0_{p-1\times 1}& 0_{p-1\times 1} \\
				0_{1\times p-1}& \frac{1}{\sqrt{N}\log\delta}& 0 \\
				0_{1\times p-1}& 0& \frac{1}{\sqrt{N}}
			\end{pmatrix}.
		\end{align*}
		Then, we obtain the following weak LAN property. 
		\begin{thm}\label{Kawai.extension}
			Suppose Assumption \ref{model.assump.}. The family of measures $\{P_{\theta, \sigma}^{(\delta)}; (\theta, \sigma)\in\Theta\times\Sigma\}$ is LAN 
			at any points $(\theta_0, \sigma_0)\in\Theta\times\Sigma$ for the rate matrix $\bar{\phi}_N(\theta_0, \sigma_0)$ in a weak sense, 
			that is, the log-likelihood ratio admits the following representation for any $u\in\mathbb{R}^{p+1}:$
			\begin{align*}
				\log{\frac{dP_{(\theta_0, \sigma_0)+\bar{\phi}_N(\theta_0, \sigma_0)u}^{(\delta)}}{dP_{\theta_0, \sigma_0}^{(\delta)}}}
				=\langle u, \bar{\zeta}_N(\theta_0, \sigma_0)\rangle - \frac{1}{2}\langle \mathcal{I}(\theta_0, \sigma_0)u, u\rangle + \bar{r}_N(\theta_0, \sigma_0),
			\end{align*}
			where
			\begin{align*}
				\bar{\zeta}_N(\theta_0, \sigma_0)\rightarrow\mathcal{N}(0, \mathcal{I}(\theta_0, \sigma_0)),\hspace{0.2cm} \bar{r}_N(\theta_0, \sigma_0)\rightarrow 0,
			\end{align*}
			in law under $P_{\theta_0, \sigma_0}^{(\delta)}$ and the asymptotic Fisher information matrix $\mathcal{I}(\theta, \sigma)$ is given by
			\begin{align*}
				\begin{pmatrix}
					\mathcal{F}_{p-1}(\theta)&a_{p-1}(\theta)&\frac{1}{\sigma}a_{p-1}(\theta) \\
					a_{p-1}(\theta)^{\mathrm{T}}& 2 &\frac{2}{\sigma} \\
					\frac{1}{\sigma}a_{p-1}(\theta)^{\mathrm{T}}& \frac{2}{\sigma} &\frac{2}{\sigma^2}
				\end{pmatrix}.
			\end{align*}
			In particular, the asymptotic Fisher information matrix $\mathcal{I}(\theta, \sigma)$ is singular 
			unless either the Hurst parameter $H$ or the diffusion parameter $\sigma$ is known. 
		\end{thm}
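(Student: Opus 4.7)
The plan is to mirror the LAN proofs of Kawai (2013) and Brouste--Fukasawa (2016) in the present, more general, spectral-density setting. Since $\{X_j^{\delta}\}$ is centered Gaussian with covariance matrix given by the Toeplitz operator $T_N(f^{\delta}_{\theta,\sigma})$, the log-likelihood ratio reads
\begin{align*}
\log\frac{dP^{(\delta)}_{(\theta_0,\sigma_0)+\bar{\phi}_N u}}{dP^{(\delta)}_{(\theta_0,\sigma_0)}}
=-\frac{1}{2}\log\frac{\det T_N(f^{\delta}_{u})}{\det T_N(f^{\delta}_{0})}
-\frac{1}{2}\mathbf{X}_N^{\mathrm{T}}\left(T_N(f^{\delta}_u)^{-1}-T_N(f^{\delta}_0)^{-1}\right)\mathbf{X}_N,
\end{align*}
where $f^{\delta}_u$ and $f^{\delta}_0$ abbreviate the spectral densities at the perturbed and base parameters. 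I would first replace this exact log-likelihood by the Whittle contrast
\begin{align*}
\ell_N^{W}(\theta,\sigma)=\frac{N}{4\pi}\int_{-\pi}^{\pi}\left\{\log f^{\delta}_{\theta,\sigma}(\lambda)+\frac{I_N(\lambda)}{f^{\delta}_{\theta,\sigma}(\lambda)}\right\}d\lambda,
\end{align*}
using the Toeplitz log-determinant and trace approximations developed in Fox--Taqqu (1986), Dahlhaus (1989), and Lieberman et al.\ (2009); under condition $(A.\ref{spd.asyrelation})$ the approximation errors are uniformly negligible over any $O(\bar{\phi}_N)$-ball around $(\theta_0,\sigma_0)$.

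The heart of the argument is a second-order Taylor expansion of $\ell_N^{W}$ at $(\theta_0,\sigma_0)$: the linear term gives $\langle u,\bar{\zeta}_N(\theta_0,\sigma_0)\rangle$ with $\bar{\zeta}_N=\bar{\phi}_N\nabla\ell_N^{W}(\theta_0,\sigma_0)$, the quadratic coefficient $\bar{\phi}_N\nabla^2\ell_N^{W}\bar{\phi}_N$ should converge in probability to $\mathcal{I}(\theta_0,\sigma_0)$, and the cubic remainder should be $o_{P^{(\delta)}_{\theta_0,\sigma_0}}(1)$. The key computational ingredient is the decomposition $\log f^{\delta}_{\theta,\sigma}(\lambda)=2\log\sigma+2H\log\delta+\log f_{\theta}(\lambda)$, which produces
\begin{align*}
\partial_{\psi_i}\log f^{\delta}=\partial_{\psi_i}\log f_{\theta},\quad \partial_{H}\log f^{\delta}=2\log\delta+\partial_{H}\log f_{\theta},\quad \partial_{\sigma}\log f^{\delta}=2/\sigma.
\end{align*}
The $2\log\delta$ term in the $H$-derivative is exactly compensated by the $1/(\sqrt{N}\log\delta)$ rate assigned to $H$ in $\bar{\phi}_N$. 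Assembling the Whittle-type integrals $\frac{1}{4\pi}\int\partial_j\log f^{\delta}\,\partial_k\log f^{\delta}\,d\lambda$ and rescaling by $\bar{\phi}_N$ yields exactly the entries of $\mathcal{I}(\theta_0,\sigma_0)$: for instance the $(H,H)$-entry converges to $(2\log\delta)^{2}/(2(\log\delta)^{2})=2$, the $(\psi,H)$-block to $a_{p-1}(\theta_0)$, and the $(H,\sigma)$-entry to $2/\sigma_0$, while subleading terms involving $a_{p,p}$ and $\mathcal{F}_p$ vanish as $\delta\to 0$. Asymptotic normality of $\bar{\zeta}_N$ then comes from the Fox--Taqqu-type central limit theorem for Gaussian quadratic forms whose kernels are Toeplitz matrices generated by functions with power-type singularity at the origin---precisely the setting that condition $(A.\ref{spd.asyrelation})$ is crafted to accommodate---and convergence of the Hessian is the associated law of large numbers.

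The main obstacle is controlling the cubic Taylor remainder uniformly in the anisotropic neighborhood $(\theta_0,\sigma_0)+\bar{\phi}_N u$: the $H$-coordinate ranges over a ball of radius $1/(\sqrt{N}\log\delta)\ll 1/\sqrt{N}$, so one must verify that the Fox--Taqqu trace estimates close on this non-standard scale. Because each additional $H$-differentiation of $\log f^{\delta}$ beyond the first contributes no new $\log\delta$ factor (the $\log\delta$ enters $\log f^{\delta}$ only linearly), and because the uniform bounds in $(A.\ref{spd.asyrelation})(b)$ control derivatives of $f_{\theta}$ up to order three together with the mixed $\lambda$-derivative, the trace expansions close in the required sense, just as in Kawai (2013) and Brouste--Fukasawa (2016); the extension to general $f_{\theta}$ is then essentially mechanical, which is what the authors mean by ``similar arguments.'' The singularity of the resulting $\mathcal{I}(\theta_0,\sigma_0)$---its last column equals $(1/\sigma_0)$ times its second column---accounts for the ``weak'' qualifier in the LAN conclusion.
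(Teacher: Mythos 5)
The paper states Theorem \ref{Kawai.extension} without proof, referring only to ``similar arguments'' to Kawai (2013) and Brouste--Fukasawa (2016), and your proposal is precisely a fleshed-out version of that intended argument: Whittle approximation of the Gaussian log-likelihood ratio via Toeplitz trace/log-determinant estimates, a second-order Taylor expansion whose rescaled Hessian converges to $\mathcal{I}(\theta_0,\sigma_0)$, and a Fox--Taqqu-type CLT for the score. Your entrywise computation of $\mathcal{I}$ from $\log f^{\delta}_{\theta,\sigma}=2\log\sigma+2H\log\delta+\log f_{\theta}$ and the rate matrix $\bar{\phi}_N$ is correct (as is the observation that the $\sigma$-column is $\sigma^{-1}$ times the $H$-column, explaining the singularity and the ``weak'' qualifier), modulo the trivial sign convention that $\bar{\zeta}_N$ is the negative gradient of your Whittle contrast rescaled by $\bar{\phi}_N$.
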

		
		Next, we show the extension of the result in Brouste and Fukasawa (2016). 
		Here, we introduce a certain class of rate matrices. 
		\begin{assump}\label{assump.rate}
			Consider a matrix 
			\begin{align*}
				\phi_N=\phi_N(\theta, \sigma)=
				\begin{bmatrix}
					\phi_{N, 1}&0_{p\times 1} \\
					0_{1\times p}&\phi_{N, 2}
				\end{bmatrix}, 
			\end{align*}
			where
			\begin{align*}
				\phi_{N, 1}=&\phi_{N, 1}(\theta, \sigma)=\mathrm{diag}(d_N^{(1)}, \cdots, d_N^{(p-1)})=
				\begin{pmatrix}
					d_N^{(1)}&&O \\
					&\ddots& \\
					O&&d_N^{(p-1)}
				\end{pmatrix}, \\
				\phi_{N, 2}=&\phi_{N, 2}(\theta, \sigma)=
				\begin{bmatrix}
					\alpha_N&\hat{\alpha}_N \\
					\beta_N&\hat{\beta}_N
				\end{bmatrix},
			\end{align*}
			with the following properties:
			\begin{enumerate}
				\item $|\phi_{N, 1}|=d_N^{(1)}\cdots d_N^{(p-1)}\neq 0$ and $|\phi_{N, 2}|=\alpha_N\hat{\beta}_N-\hat{\alpha}_N\beta_N\neq 0$.
				\item $\alpha_N\sqrt{N}\rightarrow\alpha$ for some $\alpha\in\mathbb{R}$.
				\item $\hat{\alpha}_N\sqrt{N}\rightarrow\hat{\alpha}$ for some $\hat{\alpha}\in\mathbb{R}$.
				\item For $j=1, \cdots, p-1$, $d_N^{(j)}\sqrt{N}\rightarrow d^{(j)}$ for some $d^{(j)}\in\mathbb{R}\backslash\{0\}$.
				\item $\gamma_N:=\alpha_N\sqrt{N}\log\delta+\beta_N\sqrt{N}\sigma^{-1}\rightarrow\gamma$ for some $\gamma\in\mathbb{R}$.
				\item $\hat{\gamma}_N:=\hat{\alpha}_N\sqrt{N}\log\delta+\hat{\beta}_N\sqrt{N}\sigma^{-1}\rightarrow\hat{\gamma}$ for some $\hat{\gamma}\in\mathbb{R}$.
				\item $d^{(1)}\cdots d^{(p-1)}\neq 0$ and $\alpha\hat{\gamma}-\hat{\alpha}\gamma\neq 0$.
			\end{enumerate}
		\end{assump}
		Then, we obtain the following LAN property.
		\begin{thm}\label{thm.LAN}
			Suppose Assumption \ref{model.assump.}. The family of measures $\{P_{\theta, \sigma}^{(\delta)}; (\theta, \sigma)\in\Theta\times\Sigma\}$ 
			is LAN at any points $(\theta_0, \sigma_0)\in\Theta\times\Sigma$ for the rate matrix $\phi_N(\theta_0, \sigma_0)$ satisfying Assumption \ref{assump.rate}, 
			that is, the log-likelihood ratio admits the following representation for any $u\in\mathbb{R}^{p+1}:$
			\begin{align*}
				\log{\frac{dP_{(\theta_0, \sigma_0)+\phi_N(\theta_0, \sigma_0)u}^{(\delta)}}{dP_{\theta_0, \sigma_0}^{(\delta)}}}=\langle u, \zeta_N(\theta_0, \sigma_0)\rangle - \frac{1}{2}\langle \mathcal{J}(\theta_0, \sigma_0)u, u\rangle + r_N(\theta_0, \sigma_0),
			\end{align*}
			where
			\begin{align*}
				\zeta_N(\theta_0, \sigma_0)\rightarrow\mathcal{N}(0, \mathcal{J}(\theta_0, \sigma_0)),\hspace{0.2cm} r_N(\theta_0, \sigma_0)\rightarrow 0,
			\end{align*}
			in law under $P_{\theta_0, \sigma_0}^{(\delta)}$ and the matrix $\mathcal{J}(\theta, \sigma)$ is given by
			\begin{align*}
				&
				\begin{pmatrix}
					D&0_{p-1\times 2} \\
					0_{2\times p-1}& E
				\end{pmatrix}
				\begin{pmatrix}
					\mathcal{F}_{p}(\theta)& a_p(\theta) \\
					a_p(\theta)^{\mathrm{T}}& 2
				\end{pmatrix}
				\begin{pmatrix}
					D&0_{p-1\times 2} \\
					0_{2\times p-1}& E
				\end{pmatrix}^{\mathrm{T}},
			\end{align*}
			where 
			\begin{align*}
				D=D(\theta_0, \sigma_0)=\mathrm{diag}(d^{(1)}, \cdots, d^{(p-1)}) , 
				\hspace{0.2cm}
				E=E(\theta_0, \sigma_0)=
				\begin{pmatrix}
					\alpha& \gamma \\
					\hat{\alpha}& \hat{\gamma}
				\end{pmatrix}.
			\end{align*}
			In particular, the matrix $\mathcal{J}(\theta, \sigma)$ is nondegenerate.
		\end{thm}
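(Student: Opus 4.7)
The plan is to adapt the LAN argument of Brouste and Fukasawa (2016), established there for fractional Gaussian noise, to the general spectral density framework of Assumption \ref{model.assump.}. I would start from the exact Gaussian log-likelihood ratio, which equals $-\tfrac{1}{2}$ times the log-determinant ratio of the covariance Toeplitz matrices $\Sigma_N(\theta,\sigma)$ plus $-\tfrac{1}{2}$ times a quadratic form of the data against the difference of inverse covariances. Under (A.\ref{A.decomposition})--(A.\ref{spd.asyrelation}), the Whittle-type approximation replaces these Toeplitz objects by integrals of $f_{\theta,\sigma}^\delta$ and $1/f_{\theta,\sigma}^\delta$ with errors that vanish in probability, exactly as in Fox-Taqqu (1986), Dahlhaus (1989), and the extensions of Lieberman et al.\ (2009). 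A second-order Taylor expansion in $h = \phi_N u$, with the $\log\delta$ factor from differentiating $\delta^{2H}$ carefully tracked, gives an expansion of the form $\langle u,\phi_N^{\mathrm{T}} Z_N\rangle - \tfrac{1}{2}u^{\mathrm{T}} N\phi_N^{\mathrm{T}}\mathcal{I}(\theta_0,\sigma_0;\delta)\phi_N u + r_N$, where $Z_N$ is the Whittle score and $\mathcal{I}(\theta,\sigma;\delta)$ is the naive per-sample Fisher information.

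The main algebraic step is to identify $\lim_{\delta\to 0} N\phi_N^{\mathrm{T}}\mathcal{I}(\theta_0,\sigma_0;\delta)\phi_N$ with $\mathcal{J}(\theta_0,\sigma_0)$. Since $\log f_{\theta,\sigma}^\delta = \log\sigma^2 + 2H\log\delta + \log f_\theta$, the per-sample Fisher information is computed from the gradient $(\partial_{\psi_\ell}\log f_\theta,\,\partial_H\log f_\theta + 2\log\delta,\,2/\sigma)$ and therefore contains $\log\delta$-divergent entries: $2(\log\delta)^2$ in the $(H,H)$-slot, $2\log\delta/\sigma$ in the $(H,\sigma)$-slot, and $\log\delta\cdot a_\ell(\theta)$ in the $(\psi_\ell,H)$-slot. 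Assumption \ref{assump.rate} is engineered so that when $\phi_{N,2}$ acts on the lower-right $2\times 2$ block of $\mathcal{I}$ on both sides, each contribution of the form $\sqrt{N}\alpha_N\log\delta$ (respectively $\sqrt{N}\hat\alpha_N\log\delta$) is paired with $\sqrt{N}\beta_N/\sigma$ (respectively $\sqrt{N}\hat\beta_N/\sigma$) to form the finite limits $\gamma_N\to\gamma$, $\hat\gamma_N\to\hat\gamma$. A direct block computation then yields the factorization stated in the theorem; nondegeneracy of $\mathcal{J}$ follows from $\alpha\hat\gamma-\hat\alpha\gamma\neq 0$ and $d^{(1)}\cdots d^{(p-1)}\neq 0$ (item 7 of Assumption \ref{assump.rate}) together with nondegeneracy of the middle matrix, which via Lemma \ref{mat.eq.} reduces to nondegeneracy of $\mathcal{G}_p(\theta_0)$.

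The CLT $\phi_N^{\mathrm{T}} Z_N \to \mathcal{N}(0,\mathcal{J}(\theta_0,\sigma_0))$ then follows by the Cramer-Wold device from the CLT for integrated periodograms of stationary Gaussian sequences (Fox-Taqqu, extended in Lieberman et al.\ 2009), applied to the same linear combinations of score components that appear on the Fisher information side, so that the $\log\delta$ factors are absorbed consistently. The main obstacle will be controlling the remainder $r_N$: the Taylor expansion leaves a cubic term whose most delicate contributions involve $\beta_N^3$ or $\hat\beta_N^3$ multiplied by third derivatives of $\log f_\theta$. Under Assumption \ref{assump.rate}, $\beta_N,\hat\beta_N = O(|\log\delta|/\sqrt{N})$, so these cubic contributions are of order $N\cdot(|\log\delta|/\sqrt{N})^3 = |\log\delta|^3/\sqrt{N}$, which vanishes under the high-frequency sampling constraint $\inf_\delta N\delta > 0$ because $\sqrt{\delta}\,|\log\delta|^3 \to 0$. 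Combined with the integrability bounds on third derivatives from (A.\ref{spd.asyrelation})(b), this yields $r_N = o_{P_{\theta_0,\sigma_0}^{(\delta)}}(1)$ and completes the LAN representation, exactly as in Brouste and Fukasawa (2016), carried out under the more general spectral assumptions here.
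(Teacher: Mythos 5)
Your proposal takes exactly the route the paper intends: the paper states Theorem \ref{thm.LAN} without proof, deferring to ``similar arguments'' as in Kawai (2013) and Brouste--Fukasawa (2016), and your sketch is precisely that adaptation --- the $\log\delta$-divergent entries of the per-sample Fisher information, the role of $\gamma_N,\hat\gamma_N$ in Assumption \ref{assump.rate}, the resulting factorization of $\mathcal{J}$ through $D$ and $E$, and the $|\log\delta|^3/\sqrt{N}$ remainder bound under $\inf_\delta N\delta>0$ are all consistent with the stated result. No discrepancy to report.
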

		
		\subsection{The efficient estimation rate for $(\psi, H)$}
		As the rate matrix, we can take 
		\begin{align*}
			\phi_{N, 2}=\frac{1}{\sqrt{N}}
			\begin{pmatrix}
				1& 0 \\
				-\sigma\log{\delta}& \sigma
			\end{pmatrix},
		\end{align*}
		which gives $\alpha=1$, $\hat{\alpha}=0$, $\gamma=0$ and $\hat{\gamma}=\sigma^{-1}$. 
		Therefore, 
		\begin{align*}
			E=
			\begin{pmatrix}
				1&0 \\
				0&1
			\end{pmatrix}.
		\end{align*}
		Then, $\mathcal{J}(\theta, \sigma)$ is given by
		\begin{align*}
			\mathcal{J}(\theta, \sigma)=
			\begin{bmatrix}
				\mathcal{F}_p(\theta)& a_p(\theta) \\
				a_p(\theta)^{\mathrm{T}}& 2
			\end{bmatrix}^{-1}.
		\end{align*}
		By Theorem \ref{thm.LAN}, the LAN property implies that 
		\begin{align*}
			\liminf_{\eta\to 0}\liminf_{\delta\to 0}\sup_{|(\theta, \sigma)-(\theta_0, \sigma_0)|<\eta}&E_{\theta, \sigma}\left[l\left(\phi_N^{-1}
			\begin{pmatrix}
				\hat{\theta}_N - \theta \\
				\hat{\sigma}_N - \sigma
			\end{pmatrix}
			\right)\right] \\
			&\geq E_{\theta_0, \sigma_0}\left[l\left(\mathcal{J}(\theta_0, \sigma_0)^{-1}N \right)\right]
		\end{align*}
		for any symmetric, nonnegative quasi-convex function $l$ with $\lim_{|z|\to\infty}e^{-\varepsilon|z|^2}l(z)=0$ for any $\varepsilon>0$, 
		where $N\sim\mathcal{N}(0, I_{p+1})$. Since 
		\begin{align*}
			\phi_N^{-1}=
			\begin{pmatrix}
				\frac{1}{\sqrt{N}}I_{p-1}&0_{p-1\times 2} \\
				0_{2\times p-1}& \phi_{N, 2}^{-1}
			\end{pmatrix}
			\quad\mbox{where}\quad\phi_{N, 2}^{-1}=\sqrt{N}
			\begin{pmatrix}
				1& 0 \\
				\log{\delta}& \frac{1}{\sigma}
			\end{pmatrix},
		\end{align*}
		we obtain the asymptotic lower bound of
		\begin{align*}
			\liminf_{\eta\to 0}\liminf_{\delta\to 0}\sup_{|(\theta, \sigma)-(\theta_0, \sigma_0)|<\eta}&E\left[
			\begin{pmatrix}
				\sqrt{N}(\hat{\psi}_N-\psi) \\
				\sqrt{N}(\hat{H}_N-H) \\
				\sqrt{N}\log\delta(\hat{H}_N-H_0)+\frac{\sqrt{N}}{\sigma}(\hat{\sigma}_N-\sigma)
			\end{pmatrix}^{\otimes 2}
			\right]
		\end{align*}
		by taking $l(x)=x^{\mathrm{T}}x, x\in\mathbb{R}^{p+1}$ as follows.
		\begin{align*} 
			\begin{bmatrix}
				\mathcal{F}_p(\theta_0)& a_p(\theta_0) \\
				a_p(\theta)^{\mathrm{T}}& 2
			\end{bmatrix}^{-1}
			=
			\begin{bmatrix}
				\mathcal{G}_p(\theta)^{-1}& -\frac{1}{2}\mathcal{G}_p(\theta)^{-1}a_p(\theta) \\
				-\frac{1}{2}a_p(\theta)^{\mathrm{T}}\mathcal{G}_p(\theta)^{-1}& \frac{1}{2}+\frac{1}{4}a_p(\theta)^{\mathrm{T}}\mathcal{G}_p(\theta)^{-1}a_p(\theta)
			\end{bmatrix}.
		\end{align*}
		Here, we use Lemma \ref{mat.eq.} in the above equality. 
		In particular, this means the efficient rate of estimation for $(\psi, H)$ is $\sqrt{N}I_p$ 
		when all parameters $(\psi, H, \sigma)$ are unknown. 
		Note that when $\sigma$ is known, the efficient rate for $H$ is $\sqrt{N}\log\delta$, 
		which follows from Theorem \ref{Kawai.extension}.
		
		\subsection{The efficient estimation rate for $\sigma$}
		As the rate matrix, we can take 
		\begin{align*}
			\phi_{N, 2}=\frac{1}{\sqrt{N}}
			\begin{pmatrix}
				\frac{1}{\log{\delta}}& 1 \\
				0& -\sigma\log{\delta}
			\end{pmatrix},
		\end{align*}
		which gives $\alpha=0$, $\hat{\alpha}=1$, $\gamma=1$ and $\hat{\gamma}=0$. 
		Therefore, 
		\begin{align*}
			E=
			\begin{pmatrix}
				0&1 \\
				1&0
			\end{pmatrix}.
		\end{align*}
		Then, $\mathcal{J}(\theta, \sigma)$ is given by
		\begin{align*}
			\mathcal{J}(\theta, \sigma)=
			\begin{pmatrix}
				I_{p-1}&0_{p-1\times 2} \\
				0_{2\times p-1}&E
			\end{pmatrix}
			\begin{bmatrix}
				\mathcal{F}_p(\theta)& a_p(\theta) \\
				a_p(\theta)^{\mathrm{T}}& 2
			\end{bmatrix}
			\begin{pmatrix}
				I_{p-1}&0_{p-1\times 2} \\
				0_{2\times p-1}&E
			\end{pmatrix}^{\mathrm{T}}.
		\end{align*}
		By Theorem \ref{thm.LAN}, the LAN property implies that 
		\begin{align*}
			\liminf_{\eta\to 0}\liminf_{\delta\to 0}\sup_{|(\theta, \sigma)-(\theta_0, \sigma_0)|<\eta}&E_{\theta, \sigma}\left[l\left(\phi_N^{-1}
			\begin{pmatrix}
				\hat{\theta}_N - \theta \\
				\hat{\sigma}_N - \sigma
			\end{pmatrix}
			\right)\right] \\
			&\geq E_{\theta_0, \sigma_0}\left[l\left(\mathcal{J}(\theta_0, \sigma_0)^{-1}N \right)\right]
		\end{align*}
		for any symmetric, nonnegative quasi-convex function $l$ with $\lim_{|z|\to\infty}e^{-\varepsilon|z|^2}l(z)=0$ for any $\varepsilon>0$, 
		where $N\sim\mathcal{N}(0, I_{p+1})$. Since 
		\begin{align*}
			\phi_N^{-1}=
			\begin{pmatrix}
				\frac{1}{\sqrt{N}}I_{p-1}&0_{p-1\times 2} \\
				0_{2\times p-1}& \phi_{N, 2}^{-1}
			\end{pmatrix}
			\quad\mbox{where}\quad\phi_{N, 2}^{-1}=-\frac{\sqrt{N}}{\sigma}
			\begin{pmatrix}
				-\sigma\log{\delta}& -1 \\
				0& \frac{1}{\log{\delta}}
			\end{pmatrix},
		\end{align*}
		we obtain the asymptotic lower bound of
		\begin{align*}
			\liminf_{\eta\to 0}\liminf_{\delta\to 0}\sup_{|(\theta, \sigma)-(\theta_0, \sigma_0)|<\eta}&E\left[
			\begin{pmatrix}
				\sqrt{N}(\hat{\psi}_N-\psi_0) \\
				-\frac{\sqrt{N}}{\sigma\log\delta}(\hat{\sigma}_N-\sigma_0) \\
				\sqrt{N}\log\delta(\hat{H}_N-H_0)+\frac{\sqrt{N}}{\sigma}(\hat{\sigma}_N-\sigma_0)
			\end{pmatrix}^{\otimes 2}
			\right]
		\end{align*}
		by taking $l(x)=x^{\mathrm{T}}x, x\in\mathbb{R}^{p+1}$ as follows.
		\begin{align*} 
			\begin{bmatrix}
				\mathcal{F}_p(\theta_0)& a_p(\theta_0) \\
				a_p(\theta)^{\mathrm{T}}& 2
			\end{bmatrix}^{-1}
			=
			\begin{bmatrix}
				\mathcal{G}_p(\theta)^{-1}& -\frac{1}{2}\mathcal{G}_p(\theta)^{-1}a_p(\theta) \\
				-\frac{1}{2}a_p(\theta)^{\mathrm{T}}\mathcal{G}_p(\theta)^{-1}& \frac{1}{2}+\frac{1}{4}a_p(\theta)^{\mathrm{T}}\mathcal{G}_p(\theta)^{-1}a_p(\theta)
			\end{bmatrix}.
		\end{align*}
		Here, we use Lemma \ref{mat.eq.} and the relation:
		\begin{align*}
			\phi_N^{-1}
			\begin{pmatrix}
				\hat{\theta}_N - \theta \\
				\hat{\sigma}_N - \sigma
			\end{pmatrix}
			=
			\begin{pmatrix}
				I_{p-1}&0_{p-1\times 2} \\
				0_{2\times p-1}&E
			\end{pmatrix}
			\begin{pmatrix}
				\sqrt{N}(\hat{\psi}_N-\psi_0) \\
				-\frac{\sqrt{N}}{\sigma\log\delta}(\hat{\sigma}_N-\sigma_0) \\
				\sqrt{N}\log\delta(\hat{H}_N-H_0)+\frac{\sqrt{N}}{\sigma_0}(\hat{\sigma}_N-\sigma_0)
			\end{pmatrix}.
		\end{align*}
		In particular, this means the efficient rate of estimation for $\sigma$ is $\sqrt{N}/|\log\delta|$ 
		when all parameters $(\theta, \sigma)$ are unknown. 
		Note that when $H$ is known, the efficient rate for $\sigma$ is $\sqrt{N}$, 
		which follows from Theorem \ref{Kawai.extension}.
	\end{document}